\documentclass[]{article}
\usepackage{indentfirst}
\usepackage{amsmath}
\usepackage{authblk}
\usepackage{geometry}
\usepackage{accents}
\usepackage{statex}
\usepackage[francais,english]{babel}
\usepackage{url}

\usepackage[utf8]{inputenc} 
\usepackage[nottoc]{tocbibind}

\newtheorem{theorem}{Theorem}
\newtheorem{proposition}{Proposition}
\newtheorem{lemma}{Lemma}

\newenvironment{proof}[1][Proof]{\begin{trivlist}
		\item[\hskip \labelsep {\bfseries #1}]}{\end{trivlist}}
\newenvironment{definition}[1][Definition]{\begin{trivlist}
		\item[\hskip \labelsep {\bfseries #1}]}{\end{trivlist}}
\newenvironment{example}[1][Example]{\begin{trivlist}
		\item[\hskip \labelsep {\bfseries #1}]}{\end{trivlist}}

%opening
\title{Automatic sequences defined by Theta functions and some infinite products}
\author{Shuo LI}
\date {}

\setcounter{Maxaffil}{0}

\begin{document}
	
\maketitle

\section{Introduction}
Let $p(x) \in C(x)$ be a rational function satisfying the condition $p(0)=1$ and $q$ an integer larger than $1$, in this article we will consider the expansion in power series of the infinite product 

$$f(x)=\prod_{s=0}^{\infty}p(x^{q^{s}})=\sum_{i=0}^{\infty}c_ix^i,$$
 and study when the sequence $(c_i)_{i \in \mathbf{N}}$ is $q$-automatic. This topic has been studied by many authors, such as \cite{dumas}, \cite{duke2015} and \cite{Checcoli2018} , using analytical approach, here we want to review this topic by a basic algebraic approach.

The main result is that for given integers $q \geq 2$ and $d \geq 0$, there exist finitely many polynomials of degree $d$ defined over the field of rational numbers $\mathbf{Q}$, such that $f(x)=\prod_{s=0}^{\infty}p(x^{q^{s}})=\sum_{i=0}^{\infty}c_ix^i$ is a $q$-automatic power series. 

\section{Definitions and generality}

\begin{definition} 
Let $(a_n)_{n \in \mathbf{N}}$ be a sequence, we say it is $q$-automatic if the set 
$$Ker((a_i)_{i \in \mathbf{N}})=\left\{(a_{q^ln+b})_{n \in \mathbf{N}}|l \in \mathbf{N}, 0\leq b < q^l\right\}$$
is finite. This set will be called the $q$-kernel of $(a_n)_{n \in \mathbf{N}}$.\\

For every couple of  integers $(l,b)$ satisfying $l \in \mathbf{N}, 0\leq b < q^l$, let us define a relation $R_{l,b}$ over the sequence space: we say $R_{l,b}((a_n)_{n \in \mathbf{N}},(b_n)_{n \in \mathbf{N}})$ if and only if
$$\forall n \in \mathbf{Z}, b_n=a_{q^ln+b}.$$
\end{definition}

\begin{definition} 
Let $\sum_{i=0}^{\infty}a_ix^i$ be a  power series, we say it is $q$-automatic if the sequence of coefficients $(a_n)_{n \in \mathbf{N}}$ is $q$-automatic. 

Similarly we define operators $O_{l,b}$ over the space of power series:

$$O_{l,b}(\sum_{n=0}^{\infty}a_nx^n)=\sum_{n=0}^{\infty}a_{q^ln+b}x^n.$$

\end{definition}

Now let us consider a detailed version of a well-known theorem, see, for example, \cite{Allouche}.

\begin{proposition}
let $f \in F((x))$ be a $k$-automatic power series, then there exist polynomials $a_0(x),a_1(x),...,a_m(x) \in F[x]$ with $a_0(x)a_m(x)\neq 0$ such that $$\sum_{i=0}^{m}a_i(x)f(x^{k^i})=0.$$
Furthermore, the coefficients of $a_0(x),a_1(x),...,a_x(t)$ depend only on $R_{l,b}$ relations over the $q$-kernel  of the sequence of the coefficients of $f$. 
\end{proposition}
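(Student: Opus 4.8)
The plan is to convert the finiteness of the $k$-kernel into a first-order linear Mahler system satisfied by the kernel elements, and then to collapse this system into a single scalar equation by linear algebra over the field $F(x)$. Write $(a_n)_{n\in\mathbf N}$ for the coefficient sequence of $f$ and let $g_1,\dots,g_r$ be the (finitely many, by hypothesis) distinct sequences in the $k$-kernel, indexed so that $g_1=(a_n)_n$ is the sequence of $f$ itself; I will freely identify each $g_j$ with the power series having $g_j$ as coefficients. The first observation I would record is that the kernel is closed under the operators $O_{1,0},\dots,O_{1,k-1}$: a direct index computation gives $O_{1,c}(O_{l,b}f)=O_{l+1,k^{l}c+b}f$, so there is a transition function $\delta$ with $O_{1,c}g_j=g_{\delta(c,j)}$. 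This $\delta$ is exactly a packaging of the relations $R_{l,b}$ restricted to the kernel.

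Next I would invoke the elementary $k$-section identity $g(x)=\sum_{c=0}^{k-1}x^{c}\,(O_{1,c}g)(x^{k})$, valid for every power series $g$ because $(n,c)\mapsto kn+c$ is a bijection onto $\mathbf N$. Applied to the $g_j$ and written in matrix form with $G(x)=(g_1(x),\dots,g_r(x))^{T}$, this becomes $G(x)=B(x)\,G(x^{k})$, where $B(x)\in M_r(F[x])$ has entries $B_{j,j'}(x)=\sum_{c:\,\delta(c,j)=j'}x^{c}$. The crucial structural point for the ``furthermore'' clause is that $B$ is built solely from $\delta$, i.e. from the $R_{l,b}$ relations over the kernel, and not from the actual values $a_n$.

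To produce the scalar equation I would consider, inside $F((x))$ regarded as an $F(x)$-vector space, the spaces $\mathcal W_i=\mathrm{span}_{F(x)}\{g_1(x^{k^{i}}),\dots,g_r(x^{k^{i}})\}$. Substituting $x\mapsto x^{k^{i}}$ in the section identity shows $g_j(x^{k^{i}})=\sum_c x^{k^{i}c}g_{\delta(c,j)}(x^{k^{i+1}})$, hence $\mathcal W_i\subseteq\mathcal W_{i+1}$; since each $\mathcal W_i$ is spanned by $r$ elements, the increasing union $\mathcal W=\bigcup_i\mathcal W_i$ has $\dim_{F(x)}\mathcal W\le r$. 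As $f(x^{k^{i}})=g_1(x^{k^{i}})\in\mathcal W$ for every $i$, the family $\{f(x^{k^{i}})\}_{i\ge 0}$ is linearly dependent over $F(x)$; clearing denominators yields polynomials $a_i\in F[x]$ with $\sum_{i=0}^{m}a_i(x)f(x^{k^{i}})=0$ and $m\le r$.

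Finally I would arrange $a_0a_m\neq0$ by choosing a nontrivial relation with smallest possible top index $m$ (forcing $a_m\neq0$ by definition) and then ruling out $a_0=0$: writing each $a_i(x)=\sum_{c=0}^{k-1}x^{c}a_{i,c}(x^{k})$ and collecting the identity $\sum_{i\ge1}a_i(x)f(x^{k^{i}})=0$ according to the residue of the exponent modulo $k$ produces, for each $c$, a relation $\sum_{j=0}^{m-1}a_{j+1,c}(y)f(y^{k^{j}})=0$ of strictly smaller top index, which minimality forces to be trivial, so the whole relation would vanish --- a contradiction. For the coefficient-dependence claim I would note that, once $B$ is fixed, locating the dependence among the $f(x^{k^{i}})$ and solving for the $a_i$ is pure linear algebra over $F(x)$ with data assembled from $B(x),B(x^{k}),\dots$; hence the $a_i$ depend only on the $R_{l,b}$ relations. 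I expect the main obstacle to be exactly this last point: making precise that the linear-algebra step (in particular the choice of a basis of $\mathcal W$ and the endpoint-nonvanishing normalization) can be carried out using $B$ alone, uniformly in the underlying field, rather than using the specific values of the sequence.
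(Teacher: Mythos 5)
Your proposal is correct and follows essentially the same route as the paper: both arguments rest on the closure of the $k$-kernel under the section operators $O_{1,c}$, the identity $g(x)=\sum_{c=0}^{k-1}x^{c}\,(O_{1,c}g)(x^{k})$, and a dimension count over $F(x)$. The paper expresses $f(x),f(x^{k}),\dots,f(x^{k^{N}})$ at a common level $x^{k^{N+1}}$ and observes that the resulting $N+1$ coefficient vectors in $F(x)^{N}$ must be linearly dependent; this is exactly your nested-spans argument with the spaces $\mathcal{W}_i$, put in matrix clothing. In both treatments the coefficient data (your $B(x)$, the paper's $c_b^{j}$) is manufactured from the $R_{l,b}$ relations alone, so the ``furthermore'' clause is handled identically in the two texts, and with the same residual informality that you flag in your last sentence --- the paper's phrase ``if we neglect the linear dependence between elements in $B'$'' is no more precise than your sketch. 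The one genuine difference is to your credit: the statement requires $a_0(x)a_m(x)\neq 0$, and the paper's proof never addresses this at all --- it stops at bare linear dependence. Your descent argument (take a nontrivial relation of minimal top index $m$, so $a_m\neq 0$; if $a_0=0$, split each $a_i(x)=\sum_{c}x^{c}a_{i,c}(x^{k})$ and sort the identity $\sum_{i\geq 1}a_i(x)f(x^{k^{i}})=0$ by residue classes modulo $k$, obtaining for each $c$ a relation of top index at most $m-1$, which minimality forces to be trivial, whence the original relation is trivial, a contradiction) is sound and closes this gap; it is the standard normalization step from the Mahler-equation literature, and the paper would need something like it to justify the proposition as stated.
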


\begin{proof}
Let $B$ denote the $k$-kernel of the sequence of coefficients of $f$, and $N$ denote the cardinal of $B$. We can then associate each element in $B$ with a power series by

$$(a_n)_{n \in \mathbf{N}} \to \sum_{n=0}^{\infty}a_nx^n.$$

Let $B'$ denote the image of $B$ by the previous map. For each power series in $B'$, we have 

$$\sum_{i=0}^{\infty}a_ix^i=\sum_{i=0}^{k-1}x^i(\sum_{j=0}^{\infty}a_{kj+i}x^{kj}).$$

Remarking that if the sequence $(a_n)_{n \in \mathbf{N}}$ is in $B$, then $(a_{kn+j})_{n \in \mathbf{N}}$ is also in $B$, for $j=0, 1, ..., k-1$. If we write
$$\sum_{i=0}^{\infty}a_ix^i=\sum_{(b_n)_{n \in \mathbf{N}} \in B'}c_b\sum_{i=0}^{\infty}b_ix^{ki},$$
Then $$c_b=\begin{cases}
x^i, \; \text{if} \; R_{1,i}((a_n)_{n \in \mathbf{N}},(b_n)_{n \in \mathbf{N}})\\
0, \; \text{otherwise.}
\end{cases}$$

Particularly, we can do the same thing for $f(x),f(x^k),...,f(x^{k^{N}})$:

$$\begin{cases}
f(x)=\sum_{(b_n)_{n \in \mathbf{N}} \in B}c_b^1\sum_{i=0}^{\infty}b_ix^{k^{N+1}i},\\
f(x^k)=\sum_{(b_n)_{n \in \mathbf{N}} \in B}c_b^2\sum_{i=0}^{\infty}b_ix^{k^{N+1}i},\\
...\\
f(x^{k^N})=\sum_{(b_n)_{n \in \mathbf{N}} \in B}c_b^N\sum_{i=0}^{\infty}b_ix^{k^{N+1}i};
\end{cases}$$
with $c_b^j$ defined only by $R_{l,b}$ relations. But as the cardinal of  $B'$ is $N$, the linear forms at the right-hand side of above equalities are linearly dependent. As a result, if we neglect the linear dependence between elements in $B'$, we can have a linear dependence between $f(x),f(x^k),...,f(x^{k^{N}})$ such that the coefficients depend only on  $c_b^j$. So these coefficients depend only on $R_{l,b}$ relations.
\end{proof}

Here we make this proposition precise by some examples:\\

\begin{example}
Let us consider a periodic sequence
$$a,b,a,b,a,b,a,b...$$
which is $2$-automatic.

Now let us write down the associated power sequence $F(x)=a+bx+ax^2+bx^3+...$ and two other sequences $A(x)=a+ax+ax^2+ax^3...$, $B(x)=b+bx+bx^2+bx^3...$ with constant coefficients.

So $$F(x)=A(x^2)+xB(x^2)$$
$$A(x)=(1+x)A(x^2)$$
$$B(x)=(1+x)B(x^2)$$

so we have the following dependence:

$$F(x)=(1+x^2)(1+x^4)A(x^8)+x(1+x^2)(1+x^4)B(x^8)$$
$$F(x^2)=(1+x^4)A(x^8)+x^2(1+x^4)B(x^8)$$
$$F(x^4)=A(x^8)+x^4B(x^8)$$

$F(x)$ satisfies the functional equation $$(x^8-x^6+x^4-x^2)((1+x^2)F(x^2)-F(x))=(x^4-x^3+x^2-x)(1+x^4)((1+x^4)F(x^4)-F(x^2))$$

This functional equation does not depend on the values of $a$ and $b$.
\end{example}

\begin{example}
Let us consider the Thue-Morse sequence
$$a,b,b,a,b,a, a, b, b, a, a, b, a, b, b, a...$$
which is $2$-automatic.

Now let us write down the associated power sequence $F(x)=a+bx+bx^2+ax^3+...$ and another sequence $G(x)=b+ax+ax^2+bx^3...$,  by changing $a$ to $b$ and $b$ to $a$:

So $$F(x)=F(x^2)+xG(x^2)$$
and $$G(x)=G(x^2)+xF(x^2)$$

so we have the following dependence:

$$G(x^2)=G(x^4)+x^2F(x^4)$$
$$x^2G(x^4)=F(x^2)-F(x^4)$$
$$x^2G(x^2)=x^2G(x^4)+x^4F(x^4)$$

$F(x)$ satisfies the functional equation $$(x^4-1)F(x^4)+(1+x)F(x^2)-xF(x)=0$$

This functional equation does not depend on the values of $a$ and $b$.
\end{example}

\begin{proposition}
For a given functional equation $F: \sum_{s=0}^{m}a_s(t)f(t^{k^s})=0$,  there exist finitely many polynomials $p_1, p_2,...,p_r$  with $p_i(0)=1, \forall i \in [0,r]$, such that the associated theta functions $G_r(x)= \prod_{s=0}^{\infty}p_r(x^{q^{s}})$ satisfying equation $F$.
\end{proposition}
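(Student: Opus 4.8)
The plan is to reduce the condition ``$G$ satisfies $F$'' to a single polynomial identity in the unknown $p$, and then to extract from that identity a divisibility constraint forcing only finitely many possibilities. Throughout I take the base of the functional equation to be $q$ (so $k=q$) and assume, as in Proposition 1, that $a_0(x)a_m(x)\neq 0$.

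First I would record the fundamental relation satisfied by every such product. Since $G(x)=\prod_{s=0}^{\infty}p(x^{q^{s}})=p(x)\prod_{s=0}^{\infty}p(x^{q^{s+1}})=p(x)G(x^{q})$, iterating gives $G(x^{q^{s}})=G(x)/\prod_{j=0}^{s-1}p(x^{q^{j}})$ for every $s$. Note that $G(0)=\prod_{s}p(0)=1$, so $G$ is a unit in $F[[x]]$ and each $p(x^{q^{j}})$ is a nonzero polynomial. Substituting these expressions into $F$, dividing by the unit $G(x)$, and then multiplying by $\prod_{j=0}^{m-1}p(x^{q^{j}})$ to clear denominators, the requirement that $G$ satisfy $F$ becomes the polynomial identity
$$\sum_{s=0}^{m}a_{s}(x)\prod_{j=s}^{m-1}p(x^{q^{j}})=0,$$
where the $s=m$ summand is the empty product $a_{m}(x)$. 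This identity involves only $p$, not $G$.

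The key step is to read off a divisibility relation. Every summand with $s\leq m-1$ contains the factor $p(x^{q^{m-1}})$, so I can factor it out and write the identity as $p(x^{q^{m-1}})\,Q(x)+a_{m}(x)=0$, where $Q(x)=\sum_{s=0}^{m-1}a_{s}(x)\prod_{j=s}^{m-2}p(x^{q^{j}})\in F[x]$. Hence $p(x^{q^{m-1}})$ divides $a_{m}(x)$ in $F[x]$. Since $a_{m}\neq 0$, it has only finitely many monic divisors; the normalization $p(0)=1$ fixes the scalar, and the map $p\mapsto p(x^{q^{m-1}})$ is injective (the coefficient of $x^{i}$ in $p$ equals the coefficient of $x^{q^{m-1}i}$ in $p(x^{q^{m-1}})$, while all other coefficients must vanish), so each admissible divisor yields at most one $p$. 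Therefore the number of products $G$ satisfying $F$ is bounded by the finite number of monic divisors of $a_{m}$.

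I expect the main thing to be careful about is the passage to the polynomial identity: one must check that dividing by $G$ and clearing the factors $p(x^{q^{j}})$ are legitimate, which is exactly what $G(0)=1$ and $p(0)=1$ guarantee, and that $a_{m}\neq 0$ (inherited from the normal form of $F$ in Proposition 1) so that the divisibility is non-vacuous. A cruder alternative to the third paragraph would be a degree comparison: for $\deg p$ large the $s=0$ summand $a_{0}(x)\prod_{j=0}^{m-1}p(x^{q^{j}})$ strictly dominates all others in degree and has nonzero leading coefficient, so it cannot cancel, which bounds $\deg p$; one then checks that for each bounded degree the coefficientwise identity cuts out a finite set. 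The divisibility argument is cleaner and moreover gives an explicit bound in terms of the number of divisors of $a_{m}$. Note finally that divisibility is only a necessary condition, but necessity already suffices for finiteness.
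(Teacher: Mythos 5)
Your proposal is correct and follows essentially the same route as the paper: you substitute the relation $G(x)=p(x)G(x^{q})$ into the functional equation to obtain the polynomial identity $\sum_{s=0}^{m}a_{s}(x)\prod_{j=s}^{m-1}p(x^{q^{j}})=0$, observe that every summand except the last carries the factor $p(x^{q^{m-1}})$, and conclude $p(x^{q^{m-1}})\mid a_{m}(x)$, which together with the normalization $p(0)=1$ leaves only finitely many candidates. If anything, your write-up is more careful than the paper's own proof, which has garbled indices in the product and leaves implicit both the legitimacy of dividing by the unit $G$ and the need for $a_{m}\neq 0$.
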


\begin{proof}
 If $p(x)$ is a such polynomial satisfying $p(0)=1$. Let us denote by $G(x)$ the associated power series. By hypothesis, it satisfies the functional equation $F$:
 $$\sum_{s=0}^ma_s(x)G(x^{q^s})=0.$$
 On the other hand, the power series $G$ satisfies another functional equation:
 $$G(x)=p(x)G(x^q).$$
 Plugging the second equation into the first one, we get
 $$\sum_{s=0}^{m}a_s(x)\prod_{r=s}^{m}p(x^{q^{m-r}})=0.$$
 An observation is that all terms in the sum contain a factor $p(x^{q^{m-1}})$ except the last one. So we have
 $$p(x^{q^{m-1}})|a_m(x)$$ 
 with $p(0)=1$, so there are finitely many choices for $p(x)$.
\end{proof}

\begin{proposition}
For a fixed number $k$, there are finitely many polynomials $p_1, p_2,...,p_r$ such that the theta functions $G_j(x)= \prod_{s=0}^{\infty}p_j(x^{q^{s}})$ are $q$-automatic and the sizes of their $q$-kernels are bounded by $k$.
\end{proposition}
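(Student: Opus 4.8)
The plan is to read the previous two propositions as the two halves of a single counting argument: Proposition~1 attaches to each admissible polynomial a functional equation, while Proposition~2 shows that each functional equation has only finitely many admissible polynomials attached to it. So I would first bound the number of functional equations that can occur, and then sum the finite contributions coming from Proposition~2.

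First I would fix the bound $k$ and take any polynomial $p$ with $p(0)=1$ for which $G_p(x)=\prod_{s=0}^{\infty}p(x^{q^{s}})$ is $q$-automatic with $q$-kernel of cardinality at most $k$. Applying Proposition~1 (with the base equal to $q$) to the power series $G_p$, one obtains a functional equation $\sum_{i=0}^{m}a_i(x)G_p(x^{q^{i}})=0$ with $a_0 a_m\neq 0$ and $m$ bounded in terms of $k$, whose coefficients are determined solely by the $R_{l,b}$ relations on the kernel of the coefficient sequence of $G_p$.

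The central step, and the place where the argument must be made watertight, is to see that only finitely many such equations can arise. The $q$-kernel of a $q$-automatic sequence, equipped with the operators $O_{1,b}$ for $0\leq b<q$, is precisely the transition structure of a deterministic automaton on at most $k$ states over a $q$-letter alphabet; every $R_{l,b}$ relation is generated by iterating these transitions. Hence the entire relation structure is encoded by a map from $\{1,\dots,k\}\times\{0,\dots,q-1\}$ to $\{1,\dots,k\}$ together with the choice of which state is the original sequence, and there are only finitely many such data. By the ``furthermore'' clause of Proposition~1 the coefficients $a_i(x)$ depend only on this structure, so as $p$ ranges over all admissible polynomials the equation $\sum_{i=0}^{m}a_i(x)G_p(x^{q^{i}})=0$ takes only finitely many values.

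Finally I would apply Proposition~2 to each of these finitely many equations $F$. For each $F$, Proposition~2 produces finitely many polynomials $p$ with $p(0)=1$ whose theta function satisfies $F$; concretely the divisibility $p(x^{q^{m-1}})\mid a_m(x)$ bounds the degree of $p$ and, together with $p(0)=1$, leaves only finitely many choices. Since every admissible $p$ satisfies one of the finitely many equations $F$, the set of admissible polynomials is contained in a finite union of finite sets and is therefore finite. The main obstacle is the middle paragraph: one must verify both that the combinatorial automaton data genuinely determines the polynomial coefficients $a_i(x)$ (so that equal structures give equal equations) and that these data range over a finite set, which together force the finiteness of the set of functional equations.
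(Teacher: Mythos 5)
Your proposal is correct and follows essentially the same route as the paper: the paper's (very terse) proof likewise observes that fixing the size of the $q$-kernel bounds the number of possible $R_{l,b}$ relation structures, hence the number of functional equations arising from Proposition 1, and then concludes by applying Proposition 2 to each such equation. Your middle paragraph simply makes explicit, via the count of transition maps $\{1,\dots,k\}\times\{0,\dots,q-1\}\to\{1,\dots,k\}$ together with a choice of initial state, the finiteness of relation structures that the paper leaves implicit.
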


\begin{proof}
 Fixing the size of the $q$-kernel, we fix the number of possibilities of $R_{l,b}$ relations, so the possible functional equations, and we conclude by Proposition 4.2.
\end{proof}

\section{Infinite product of polynomials}

Let $p=\sum_{i=1}^na_ix^i$ be a polynomial with coefficients in  $\mathbf{C}$ and $q$ be an integer larger than $1$. It is known that the coefficients of the power series 
$$f(x)= \prod_{s=0}^{\infty}p(x^{q^{s}})$$
form a $q$-regular sequence \cite{dumas}, here we want to study when this sequence is $q$-automatic. 

Firstly, let us suppose that the degree of $p$, noted $deg(p)$, satisfies $q^{k-1}<deg(p)\leq q^k$ for some $k \in \mathbf {N}$ and write

$$f(x)=\prod_{s=0}^{\infty}p(x^{q^{s}})=\sum_{i=1}^{\infty}c_ix^i.$$
Then the coefficients $c_i$ satisfy a recurrence relation:

 \begin{equation} 
\begin{aligned} 
c_{nq+r}=\sum_{\substack{0 \leq j \leq q^k)\\ j \equiv r \pmod q} } a_{j}c_{n+\frac{r-j}{q}}
\end{aligned}
\end{equation}
for all $r$ such that $0 \leq r \leq q-1$ and $c_n=0$ for all negative indices.

\begin{lemma}
The sequences $(c_{qn+i-j})_{n \in \mathbf{N}}$, for all $i$ and $j$ such that $0\leq i\leq q-1$ and $0\leq j\leq 2q^{k}$, can be represented as linear combinations of sequences $\left\{(c_{n-i})_{n \in \mathbf{N}}|0\leq i< 2q^{k}\right\}$.
\end{lemma}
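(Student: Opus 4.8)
The plan is to obtain each sequence $(c_{qn+i-j})_{n\in\mathbf N}$ directly from the recurrence (1), by rewriting its index in the reduced form $qN+r$ with $0\le r\le q-1$ and then reading the recurrence as an identity of sequences in the variable $n$. The coefficients appearing will be the $a_s$, so the result is automatically a linear combination with constant coefficients, which is all the lemma asks for.

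First I would perform Euclidean division of $i-j$ by $q$: write $i-j=qm+r$ with $0\le r\le q-1$, so that $qn+i-j=q(n+m)+r$. Setting $N=n+m$, equation (1) gives
$$c_{q(n+m)+r}=\sum_{\substack{0\le s\le q^k\\ s\equiv r\!\!\pmod q}}a_s\,c_{\,n+m+(r-s)/q}.$$
Reindexing the sum by $s=r+lq$, so that $(r-s)/q=-l$, turns this into
$$c_{qn+i-j}=\sum_{l=0}^{L_r}a_{r+lq}\,c_{\,n-(l-m)},$$
a finite linear combination of the pure shifts $(c_{n-(l-m)})_{n}$, where the upper limit $L_r=\lfloor (q^k-r)/q\rfloor\le q^{k-1}$ comes from the constraint $s\le q^k$. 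I would stress that this identity holds for every $n$ once we keep the convention $c_n=0$ for negative indices: if $N<0$, or if some $n-(l-m)<0$, the corresponding terms vanish on both sides, so nothing beyond (1) is needed.

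The only real content is then the index bookkeeping, namely verifying that each shift $l-m$ lands in the prescribed window $[0,2q^k)$. The relevant ranges are $0\le l\le q^{k-1}$ and, since $0\le i\le q-1$ and $0\le j\le 2q^k$ force $-2q^k\le i-j\le q-1$, the bound $-2q^{k-1}\le m\le 0$. Combining these gives $0\le l-m\le q^{k-1}+2q^{k-1}=3q^{k-1}$. I would then close with the elementary estimate $3q^{k-1}\le 2q^k-1$, valid because $2q^k-3q^{k-1}=q^{k-1}(2q-3)\ge 1$ for every $q\ge 2$; this places $l-m$ in $[0,2q^k)$ and completes the representation.

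I expect the substantive (though still routine) step to be precisely this last inequality: the factor $2$ in the range $0\le j\le 2q^k$ together with the hypothesis $q\ge 2$ are exactly what keeps the shift below $2q^k$, and getting the extreme cases right—in particular the worst case $i=0$, $j=2q^k$, which yields $r=0$, $m=-2q^{k-1}$, $l=q^{k-1}$ and hence the maximal shift $3q^{k-1}$—is where a little care is required.
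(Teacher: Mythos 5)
Your proof is correct and takes essentially the same route as the paper: both apply recurrence (1) to the index $qn+i-j$ and reduce the lemma to bounding the resulting shifts, the paper's bound $-2q^k<-3q^{k-1}\le \frac{i-j-s}{q}\le 0$ being exactly your $0\le l-m\le 3q^{k-1}<2q^k$. Your explicit Euclidean division $i-j=qm+r$, the reindexing $s=r+lq$, and the check of the extreme case $i=0$, $j=2q^k$ merely spell out what the paper does implicitly by reading the congruence $s\equiv i-j \pmod q$ directly.
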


\begin{proof}
Because of the previous equality,  we have 
$$c_{nq+i-j}=\sum_{\substack{0 \leq s \leq q^k\\ s \equiv i-j \pmod q} } a_{s}c_{n+\frac{i-j-s}{q}}$$
for all $n,i,j$ defined as above. Now let us check that all sequences appearing on the right-hand side of these equalities are in the set defined in the statement. It is enough to calculate the shifting indices and we have the bounds as follows,

$$-2q^k<-3q^{k-1}\leq \frac{i-j-s}{q}\leq 0$$
which proves the statement.
\end{proof}

\begin{example}
Let us consider the case where $p(x)=1+x+x^2+x^3+x^4$ and $q=2$, the sequence of coefficients of the power series $F(x)=\prod_{s=0}^{\infty}p(x^{q^{s}})$ is denoted by $(c_n)_{n \in \mathbf{N}}$, 
so we have
$$p(x)=(1+x+x^2+x^3+x^4)F(x^2)$$
from which we can deduce
$$c_{2n}=c_n+c_{n-1}+c_{n-2},$$
$$c_{2n+1}=c_n+c_{n-1}.$$
 Using the above lemma, we get
 
 $$\begin{pmatrix} 
c_{2n} \\
c_{2n-1}\\ 
c_{2n-2}\\
c_{2n-3} \\
c_{2n-4}\\ 
c_{2n-5}\\
c_{2n-6}\\ 
c_{2n-7}\\
c_{2n-8} \\
\end{pmatrix}=
\begin{pmatrix} 
1&1&1&0&0&0&0&0&0\\
0&1&1&0&0&0&0&0&0\\
0&1&1&1&0&0&0&0&0\\
0&0&1&1&0&0&0&0&0\\
0&0&1&1&1&0&0&0&0\\
0&0&0&1&1&0&0&0&0\\
0&0&0&1&1&1&0&0&0\\
0&0&0&0&1&1&0&0&0\\
0&0&0&0&1&1&1&0&0\\
\end{pmatrix}
\begin{pmatrix} 
c_{n} \\
c_{n-1}\\ 
c_{n-2}\\
c_{n-3} \\
c_{n-4}\\ 
c_{n-5}\\
c_{n-6}\\ 
c_{n-7}\\
c_{n-8} \\
\end{pmatrix}$$

and 

 $$\begin{pmatrix} 
c_{2n+1} \\
c_{2n}\\ 
c_{2n-1}\\
c_{2n-2} \\
c_{2n-3}\\ 
c_{2n-4}\\
c_{2n-5}\\ 
c_{2n-6}\\
c_{2n-7} \\
\end{pmatrix}=
\begin{pmatrix} 
1&1&0&0&0&0&0&0&0\\
1&1&1&0&0&0&0&0&0\\
0&1&1&0&0&0&0&0&0\\
0&1&1&1&0&0&0&0&0\\
0&0&1&1&0&0&0&0&0\\
0&0&1&1&1&0&0&0&0\\
0&0&0&1&1&0&0&0&0\\
0&0&0&1&1&1&0&0&0\\
0&0&0&0&1&1&0&0&0\\
\end{pmatrix}
\begin{pmatrix} 
c_{n} \\
c_{n-1}\\ 
c_{n-2}\\
c_{n-3} \\
c_{n-4}\\ 
c_{n-5}\\
c_{n-6}\\ 
c_{n-7}\\
c_{n-8} \\
\end{pmatrix}$$
 
\end{example}
Because of the previous fact, we can introduce some transition matrices: for all integers $r$ such that $0 \leq r \leq q-1$ let us define $\Gamma_r$ as a square matrix of size $2q^k+1$ satisfying
$$\Gamma_r
\begin{pmatrix}
    c_n      \\
    c _{n-1} \\
    ...\\
    c_{n-2q^{k}}     
\end{pmatrix}
= 
\begin{pmatrix}
  c_{qn+r}      \\
    c _{qn+r-1} \\
    ...\\
    c_{qn+r-2q^{k}}     
\end{pmatrix} $$
for all  $n \in \mathbf{N}$.

Let us denote by $G$ the semi-group generated by all $\Gamma_r$ and multiplication. 

\begin{proposition}
$a \in \left\{c_n| n \in \mathbf{N}\right\}$ if and only if there exists a matrix $g \in G$ such that $a$ is the first element in the first row of the matrix $g$, in other words, $a=g(1,1)$. Furthermore, $(c_n)_{n \in \mathbf{N}}$ is automatic if and only if $G$ is a finite semi-group.

\end{proposition}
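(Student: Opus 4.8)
The plan is to run both equivalences through the single column vector $v_n = (c_n, c_{n-1}, \dots, c_{n-2q^k})^T$, which by the defining property of the transition matrices satisfies $\Gamma_r v_n = v_{qn+r}$ for every $n \in \mathbf{N}$. Since $p(0)=1$ forces $c_0 = 1$ and all negative-index coefficients vanish, we have $v_0 = (1,0,\dots,0)^T = e_1$. For the first assertion I would take an arbitrary element $g = \Gamma_{r_1}\cdots\Gamma_{r_t}$ of $G$ and apply it to $v_0$: reading the action from right to left gives $g v_0 = v_N$, where $N$ is the integer whose base-$q$ digits are $r_1,\dots,r_t$. Hence $g(1,1) = (g e_1)_1 = (v_N)_1 = c_N$, so every $g(1,1)$ lies in $\{c_n\}$; conversely, writing an arbitrary index $n\ge 1$ in base $q$ (and taking $g=\Gamma_0$ for $n=0$) produces a word in the generators realising $c_n = g(1,1)$.

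For the second assertion I would first record the key structural fact that the vectors $\{v_n : n\in\mathbf{N}\}$ span the whole space $\mathbf{C}^{2q^k+1}$. Indeed, the matrix with columns $v_0, v_1, \dots, v_{2q^k}$ has $(i,m)$-entry $c_{m-i}$, so it is upper triangular with the constant $c_0 = 1$ along its diagonal, hence invertible. Consequently the linear map $w \mapsto (w\cdot v_n)_{n\in\mathbf{N}}$ sending a row vector to the sequence it generates is injective: a row is completely determined by the sequence it produces. This is precisely the point that removes the apparent non-uniqueness in writing a sequence as a combination of the shifts $c_n, \dots, c_{n-2q^k}$, and it is the crux of the whole argument.

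Granting injectivity, I would argue that $G$ is finite if and only if the set $R$ of all rows occurring in elements of $G$ is finite (a matrix of fixed size is determined by its rows, and conversely there are at most $|R|^{2q^k+1}$ matrices whose rows lie in $R$), and by injectivity this holds if and only if the set of sequences produced by these rows is finite. It then remains to identify those row sequences with the $q$-kernel. Writing a generic element of $G$ as the length-$l$, value-$b$ product $g_{l,b}$ with $g_{l,b} v_n = v_{q^l n + b}$, its rows produce the sequences $(c_{q^l n + b - i})_n$ for $0\le i\le 2q^k$. When $b-i\ge 0$ this is literally the kernel element $(c_{q^l n + (b-i)})_n$; when $b-i<0$ a short computation rewrites it as a right shift (by a bounded amount, at most $2q^{k-1}$) of a suitable $q$-kernel element, padded with the vanishing negative-index coefficients. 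Thus every row sequence is a uniformly bounded shift of a $q$-kernel element, while conversely every kernel element with $l\ge 1$ is itself the first row of some $g_{l,b}$. Finiteness of the $q$-kernel is therefore equivalent to finiteness of the set of row sequences, hence to finiteness of $G$, which by definition is the automaticity of $(c_n)_{n\in\mathbf{N}}$.

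The main obstacle, as indicated, is the passage from ``finitely many kernel sequences'' to ``finitely many matrices'': a priori many distinct rows could encode the same sequence, and the reverse implication could then fail. The spanning property of the $v_n$, which comes directly from the normalisation $c_0 = 1$, is what makes the encoding faithful and closes this gap; the remaining bookkeeping (the base-$q$ digit correspondence and the bounded-shift rewriting of negative offsets) is routine.
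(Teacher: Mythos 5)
Your proof is correct, and its skeleton coincides with the paper's: the first equivalence is argued identically (base-$q$ digits drive products of the $\Gamma_r$ applied to $v_0=e_1$, using $c_0=1$ and the vanishing of negative-index coefficients), and your invertible upper-triangular matrix with columns $v_0,\dots,v_{2q^k}$ is exactly the ``constant and invertible'' matrix in the paper's proof (which misprints its entries as $a_{j-i}$; your $c_{m-i}$ is the correct version, with invertibility from $c_0=p(0)\cdot p(0)\cdots=1$). Where you genuinely diverge is in how the second equivalence is closed. The paper multiplies $g$ by this matrix to conclude that finiteness of $G$ is equivalent to finiteness of the \emph{value set} $\{c_n\}$, and then disposes of the rest with ``using the fact that $(c_n)_{n\in\mathbf{N}}$ is an automatic sequence, we conclude''; as written this establishes automatic $\Rightarrow$ finite range $\Rightarrow$ $G$ finite, but leaves the converse ($G$ finite $\Rightarrow$ automatic) essentially unargued --- finite range alone does not imply automaticity, so the paper implicitly needs either the $q$-regularity of $(c_n)$ plus the standard ``regular with finitely many values $\Rightarrow$ automatic'' theorem, or the automaton built from $G$ as in the next proposition. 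You instead route both directions through the $q$-kernel itself: spanning of the $v_n$ makes the row-to-sequence encoding injective, row $i$ of $g_{l,b}$ generates $(c_{q^ln+b-i})_n$, which you correctly resolve (for $b-i<0$, writing $b-i=b'-mq^l$ with $0\le b'<q^l$) as a zero-padded right shift, bounded by $2q^{k-1}$ since $l\ge 1$ for every element of $G$, of a kernel element, while every kernel element with $l\ge 1$ occurs as a first-row sequence. This yields kernel finite $\Leftrightarrow$ finitely many row sequences $\Leftrightarrow$ finitely many rows $\Leftrightarrow$ $G$ finite with no external input, so your version is self-contained and actually supplies the direction the paper glosses over; what the paper's reduction to the value set buys is brevity and the statement, useful elsewhere in the paper, that each matrix entry of $g\in G$ is a fixed linear combination of terms of $(c_n)$. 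The only cosmetic point worth fixing is the digit convention: for $g=\Gamma_{r_1}\cdots\Gamma_{r_t}$ one gets $gv_0=v_N$ with $N=r_1+r_2q+\cdots+r_tq^{t-1}$, i.e.\ $r_1$ is the \emph{least} significant digit (the paper's displayed product has the same reversal), which affects nothing since every index arises from some word.
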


\begin{proof}
The first part of this proposition is trivial,  for any $r \in \mathbf{N}$, let us consider its $q$-ary expansion $r=\overline{s_{k_1}s_{k_1-1}...s_0}$. Using Lemma 4.1, we have

$$\begin{pmatrix} 
c_{r} \\
c_{r-1}\\ 
...\\
c_{r-2q^{k}}  \\
\end{pmatrix}=
\Gamma_{s_{k_1}}\Gamma_{s_{k_1-1}}..\Gamma_{s_0}\begin{pmatrix} 
1 \\
0\\ 
...\\
0\\
\end{pmatrix},$$
which proves the first part of the statement.\\

For the second part, let us define maps $\gamma_r$ for all integers $r$ by $\gamma_r(n)=q(q(...q(q(n)+s_0)...)+s_{k_1-1})+s_{k_1}$ for all $n \in \mathbf{N}$ if $r=\overline{s_{k_1}s_{k_1-1}...s_0}$. Then there is an equality for all $r$:

$$\begin{pmatrix} 
c_{\gamma_r(0)} & c_{\gamma_r(1)} & ...&c_{\gamma_r(2q^k)}\\
c_{\gamma_r(0)-1}&c_{\gamma_r(1)-1}&...&c_{\gamma_r(2q^k)-2q^k}\\ 
...\\
c_{\gamma_r(0)-2q^k}&c_{\gamma_r(1)-2q^k}&...&c_{\gamma_r(2q^k)-2q^k} \\
\end{pmatrix}=
\Gamma_{s_{k_1}}\Gamma_{s_{k_1-1}}..\Gamma_{s_0}
\begin{pmatrix}
a_{0} & a_{1} & ...&a_{2q^k}\\
0&a_{0}&...&a_{2q^k-1}\\ 
...\\
0&0&...&a_0\\
\end{pmatrix}.
$$

But the the last matrix in the above equality is constant and invertible, so each element of a matrix $g \in G$ is a finite linear composition of elements in the sequence $(c_n)_{n \in \mathbf{N}}$, so the finiteness of elements in $(c_n)_{n \in \mathbf{N}}$ is equivalent to the finiteness of elements in $G$. And using the fact that $(c_n)_{n \in \mathbf{N}}$ is an automatic sequence, we conclude the statement.
\end{proof}

\begin{proposition}
For given integers $q \geq 2$ and $d \geq 0$, there exist finitely many polynomials of degree $d$ defined over the field of rational numbers $\mathbf{Q}$, such that $\prod_{s=0}^{\infty}p(x^{q^{s}})=\sum_{i=1}^{\infty}c_ix^i$ is a $q$-automatic power series.
\end{proposition}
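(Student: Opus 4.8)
The plan is to reduce the statement to the assertion that the coefficient sequence is bounded at every place, to show that this forces every root of $p$ to be a root of unity, and then to finish with a count of cyclotomic factors. By the preceding proposition, $f(x)=\prod_{s\ge0}p(x^{q^s})$ is $q$-automatic if and only if the semigroup $G$ generated by $\Gamma_0,\dots,\Gamma_{q-1}$ is finite, which in turn holds if and only if the value set $\{c_n:n\in\mathbf{N}\}\subset\mathbf{Q}$ is finite. A finite subset of $\mathbf{Q}$ has a common denominator and is bounded, so finiteness of the value set is equivalent to $\sup_n|c_n|_v<\infty$ at every place $v$ of $\mathbf{Q}$ (the archimedean absolute value together with the $\ell$-adic ones, the latter being trivially bounded for all but finitely many $\ell$). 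Thus it suffices to prove that, for fixed $q$ and $d$, only finitely many $p\in\mathbf{Q}[x]$ with $\deg p=d$ and $p(0)=1$ satisfy $\sup_n|c_n|_v<\infty$ at every $v$.

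First I would fix a place $v$ and write $p(x)=a_d\prod_{i=1}^d(x-\rho_i)$ over $\overline{\mathbf{Q}}_v$. Since $p(0)=1$ one has $|a_d|_v\prod_i|\rho_i|_v=1$, so the $v$-adic Mahler measure satisfies $M_v(p)=\prod_{|\rho_i|_v<1}|\rho_i|_v^{-1}\ge1$, with equality exactly when no root lies strictly inside the $v$-adic unit disk. The key estimate is that a root with $|\rho_i|_v<1$ forces $\sup_n|c_n|_v=\infty$: iterating $f(x)=f(x^{q^S})\prod_{s=0}^{S-1}p(x^{q^s})$, such a root makes the finite products grow like $M_v(p)^S>1$ along a subsequence, equivalently the joint spectral radius of the transfer matrices $\Gamma_0,\dots,\Gamma_{q-1}$ at the place $v$ is at least $M_v(p)$, so $M_v(p)>1$ is incompatible with the boundedness of $(|c_n|_v)_n$. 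Consequently $\sup_n|c_n|_v<\infty$ at every $v$ forces $M_v(p)=1$, i.e.\ $|\rho_i|_v\ge1$ for every root and every place.

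Now the product formula $\prod_v|\rho_i|_v=1$, combined with $|\rho_i|_v\ge1$ for all $v$, gives $|\rho_i|_v=1$ at every place, whence by Kronecker's theorem each $\rho_i$ is a root of unity. Therefore, up to the leading constant $a_d$ (which is $\pm1$, being fixed by $p(0)=1$), $p$ is a product of cyclotomic polynomials $\Phi_m$ with $\sum_m(\text{multiplicity})\cdot\varphi(m)=d$; since $\varphi(m)\le d$ bounds the admissible $m$, there are only finitely many such products, hence finitely many $p$, which proves the proposition.

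The main obstacle is the key estimate of the second paragraph: establishing, uniformly in $p$ and simultaneously at every place $v$, that a root strictly inside the $v$-adic unit disk makes $|c_n|_v$ unbounded. The archimedean case is the delicate one, since it requires controlling the growth of the infinite product $\prod_s p(x^{q^s})$ near the unit circle, where the relevant dynamics is that of $x\mapsto x^q$ on its recurrent set, the roots of unity; the example $p=(1+x)^2$, whose coefficients $c_n=n+1$ are unbounded although $M(p)=1$, shows that the borderline case $M_v(p)=1$ is genuinely subtle, so one must argue in the strict-interior regime $M_v(p)>1$ where the growth is exponential. I would carry this out through the transfer matrices $\Gamma_r$ introduced above, whose joint spectral radius at $v$ dominates $M_v(p)$, and whose finiteness (from the preceding proposition) is exactly what rules out $M_v(p)>1$; the analytic asymptotics needed here are of the type developed in \cite{dumas}, \cite{duke2015}, \cite{Checcoli2018}.
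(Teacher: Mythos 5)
You have correctly identified your own weak point, and it is a genuine gap, not a cosmetic one. Your architecture (automatic $\Rightarrow$ finite value set $\Rightarrow$ boundedness at every place $v$ $\Rightarrow$ no root strictly inside any $v$-adic unit disk $\Rightarrow$ product formula plus Kronecker $\Rightarrow$ all roots are roots of unity $\Rightarrow$ finitely many $p$) is a genuinely different route from the paper's, which stays entirely algebraic: there, automaticity produces a functional equation $\sum_{s=0}^m a_s(x)f(x^{q^s})=0$ whose coefficients depend only on the kernel relations; substituting $f(x)=p(x)f(x^q)$ yields the divisibility $p(x^{q^{m-1}})\mid a_m(x)$, hence finitely many $p$ per equation; and the number of possible equations is controlled by bounding $|G|$ through the finiteness theorem for matrix semigroups (Theorem 1.3 of \cite{ofsm}). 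Your route, by contrast, stands or falls on the ``key estimate'' that a root $\rho$ with $|\rho|_v<1$ forces $\sup_n|c_n|_v=\infty$, and this you assert rather than prove. The Mahler-measure heuristic does not close it: writing $P_S(x)=\prod_{s=0}^{S-1}p(x^{q^s})$, one has $M_v(P_S)=M_v(p)^S$, but since $\deg P_S$ grows like $q^S$, the standard inequalities $M_v(P)\le\|P\|_1\le(\deg P+1)\|P\|_\infty$ (or the $L^2$ bound at the archimedean place) only contradict bounded coefficients when $M_v(p)>q$ (respectively $M_v(p)>\sqrt{q}$). The regime $1<M_v(p)\le\sqrt{q}$ --- exactly where your claimed inequality ``joint spectral radius $\ge M_v(p)$'' must carry the load --- is untouched, and that inequality is itself unproven; this is the hard analytic content of \cite{dumas} and \cite{duke2015}, deferred rather than supplied. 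Note also that coefficients of $P_S$ of index $\ge q^S$ are not coefficients of $f$, so even relating $\|P_S\|_\infty$ to $\sup_n|c_n|_v$ requires the matrix bookkeeping of the paper's semigroup proposition (entries of products of the $\Gamma_r$ are linear combinations of the $c_n$ via a fixed invertible triangular matrix), which you invoke only implicitly.

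Two smaller points. The parenthetical that $a_d$ ``is $\pm1$, being fixed by $p(0)=1$'' is wrong as stated: $p(0)=1$ fixes the constant term, and $p(x)=1+2x$ has leading coefficient $2$. The conclusion is salvageable: once $|\rho_i|_v\ge1$ for every root and every place, the relation $1=|a_d|_v\prod_i|\rho_i|_v$ gives $|a_d|_v\le1$ for all $v$, and the product formula then forces $a_d=\pm1$ and $|\rho_i|_v=1$ everywhere, after which each $\rho_i$ is an algebraic integer whose conjugates (all being roots of $p\in\mathbf{Q}[x]$) lie on the unit circle, so Kronecker applies. Second, had the key estimate been established, your argument would actually prove more than the proposition --- a root-of-unity characterization of the candidate polynomials --- and your own example $(1+x)^2$, with $c_n=n+1$ unbounded despite all roots being roots of unity, shows the resulting finite list overcounts; that overcounting is harmless for finiteness, but it underlines that the boundary case is where the real difficulty lives. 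As submitted, the proof is incomplete at its crux, whereas the paper's divisibility argument $p(x^{q^{m-1}})\mid a_m(x)$, with $p(0)=1$, finishes in two lines and avoids analysis altogether.
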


\begin{proof}
Suppose that the sequence $(c_n)_{n \in \mathbf{N}}$ generated by $\prod_{s=0}^{\infty}p(x^{q^{s}})=\sum_{i=1}^{\infty}c_ix^i$ is automatic. Let us consider a sequence of matrices $(\Gamma_n)_{n \in \mathbf{N}}$, such that $\Gamma_i$ are defined as above for $i=0,1,.., q-1$ and $\Gamma_{qi+j}=\Gamma_i\Gamma_j$ for all $i \geq 1$ and $j=0,1,...,q-1$.

It is easy to see that this matrix sequence is automatic because $G$ is finite. And also the automata of this matrix sequence is the same as the one of $(c_n)_{n \in \mathbf{N}}$, because $c_n$ is exactly the element at the position $(1,1)$ of the matrix $\Gamma_n$. To conclude the statement, we have to prove two things: firstly the number of automata generating the sequences $(\Gamma_n)_{n \in \mathbf{N}}$ is finite, secondly, the output functions for each automaton are also finite. 

For the first point, it is enough to show that $|G|$ is bounded by a function depending only on $d$ and $q$, which is proved by Theorem 1.3 of \cite{ofsm}. It says that given naturals $n$ and $k$, there exist, up to semi-group isomorphism, only
a finite number of finite sub-semi-groups of $M_{n}(F)$ generated by at most $k$ elements. 

For the second point, it is a consequence of Proposition 4.3.
\end{proof}

\begin{proposition}
Let $f$ be a polynomial satisfying the hypothesis in Proposition 4.5, then all its coefficients belong to $\mathbf{Z}$.
\end{proposition}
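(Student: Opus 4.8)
My plan is to prove the contrapositive one prime at a time: if the polynomial (call it $p$, with $p(0)=1$, so that $f(x)=\prod_{s\ge 0}p(x^{q^s})=\sum_n c_n x^n$ in the notation of Section 3) has a coefficient outside $\mathbf{Z}$, then the sequence $(c_n)$ has unbounded denominators, which is impossible for a $q$-automatic sequence. First I would record the two facts I get for free. Since $(c_n)$ is $q$-automatic it takes only finitely many values, all of which are rational, so there is a single $D\in\mathbf{Z}_{>0}$ with $Dc_n\in\mathbf{Z}$ for every $n$; equivalently, for every prime $\ell$ the valuations $v_\ell(c_n)$ are bounded below uniformly in $n$. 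I also keep in mind $c_0=p(0)=1$ and the functional equation $f(x)=p(x)f(x^q)$ from the proof of Proposition 4.2. Since a rational number lying in $\mathbf{Z}_\ell$ for all $\ell$ lies in $\mathbf{Z}$, it suffices to show: for each prime $\ell$, if $\{v_\ell(c_n)\}_n$ is bounded below, then $p\in\mathbf{Z}_\ell[x]$.

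Fix $\ell$ and factor $p(x)=\prod_{i=1}^{d}(1-\beta_i x)$ over $\overline{\mathbf{Q}}_\ell$, extending $v_\ell$. Because $p(0)=1$, the $\beta_i$ are exactly the reciprocal roots, and (as they are the roots of a monic polynomial whose coefficients are the elementary symmetric functions $\pm e_j(\beta)$) one has $p\in\mathbf{Z}_\ell[x]$ if and only if $v_\ell(\beta_i)\ge 0$ for all $i$. Suppose not, say $v_\ell(\beta_{i_0})=-\mu<0$. Expanding $f(x)=\prod_i\prod_{s\ge 0}(1-\beta_i x^{q^s})$, the factor $p(x^{q^s})$ vanishes at each $q^s$-th root of $1/\beta_{i_0}$, and these roots have valuation $\mu/q^s>0$; for $|x|_\ell<1$ the infinite product converges, so these are genuine zeros of the analytic function $f$ on the open unit disk, giving, for every $s\ge 0$, exactly $q^s$ zeros of valuation $\mu/q^s$.

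To finish I would read off the $\ell$-adic Newton polygon of $f$ from these zeros (the theorem that the Newton polygon of a convergent power series counts its zeros by valuation). Each batch of $q^s$ zeros of valuation $\mu/q^s$ contributes an edge of slope $-\mu/q^s$ and horizontal length $q^s$, hence a vertical drop of $\mu$; summing over $s\ge 0$ yields infinite total descent with slopes accumulating at $0^-$, so the Newton polygon decreases to $-\infty$ and has vertices $(n_j,y_j)$ with $y_j\to-\infty$. At a vertex one has $v_\ell(c_{n_j})=y_j$, so $\inf_n v_\ell(c_n)=-\infty$, contradicting the uniform lower bound from the automaticity of $(c_n)$. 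This forces $v_\ell(\beta_i)\ge 0$ for all $i$, i.e. $p\in\mathbf{Z}_\ell[x]$, and the proof closes.

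The mechanism is transparent and fully elementary in the case $\deg p=d<q$: then every $n$ has a unique expansion $n=\sum_s j_s q^s$ with $0\le j_s\le d<q$ (ordinary base-$q$ digits), so $c_n=\prod_s a_{j_s}$ and $v_\ell(c_n)=\sum_s v_\ell(a_{j_s})$; choosing $n$ to be the base-$q$ repunit in the digit $i^\ast$ that realizes $\min_i v_\ell(a_i)<0$ gives $v_\ell(c_n)\to-\infty$ immediately, with no $\ell$-adic analysis. The hard part is precisely the passage to $d\ge q$: now $n$ admits many expansions (carries), the naive dominant term $a_{i^\ast}^{k}$ can be cancelled by other expansions of equal valuation, and the minimal-valuation coefficients of the truncations $\prod_{s\le K}p(x^{q^s})$ sit at degrees of size about $\tfrac{d}{q-1}q^{K+1}$, which can exceed the bound $q^{K+1}$ below which those coefficients coincide with the genuine $c_n$. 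Showing that the Newton-polygon descent is actually realized by honest coefficients $c_n$, rather than smoothed away by cancellation, is the crux; this is exactly what the $\ell$-adic argument above is designed to circumvent, since it extracts the valuations of the $c_n$ from the reciprocal roots of $p$ directly instead of through the combinatorics of the over-complete base-$q$ representations.
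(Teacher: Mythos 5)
Your proof is correct in substance, but it takes a genuinely different route from the paper. You argue $\ell$-adically and analytically: bounded denominators (from finiteness of values of an automatic sequence) force the $\ell$-adic Newton polygon of $f$ to have bounded total descent, while a reciprocal root $\beta_{i_0}$ of $p$ with $v_\ell(\beta_{i_0})=-\mu<0$ plants, for every $s$, at least $q^s$ zeros of valuation $\mu/q^s$ in the open unit disk, each batch costing a vertical drop of $\mu$ --- contradiction. The paper instead stays entirely elementary and works with the recurrence $c_{qn+r}=\sum_{j\equiv r \; (q)} a_j c_{n+(r-j)/q}$: it fixes a prime $p$, lets $d_1$ and $d_2$ be the maximal powers of $p$ dividing denominators of the $a_i$ and of the $c_i$ respectively ($d_2$ finite precisely by the finitely-many-values observation you also use), picks the \emph{smallest} indices $l_1,l_2$ realizing them, and shows that in the expansion of $c_{l_2q+l_1}$ the term $a_{l_1}c_{l_2}$ is the unique term of minimal valuation $-(d_1+d_2)$, so by the ultrametric inequality $p^{d_1+d_2}$ divides the denominator of $c_{l_2q+l_1}$, contradicting the maximality of $d_2$. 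Notably, this minimal-index trick is exactly the discrete resolution of the cancellation-by-carries difficulty you flag in your last paragraph for $\deg p\geq q$: one does not need to control all base-$q$-like expansions of $n$, only to isolate one coefficient where the extremal term cannot be cancelled, so the ``hard part'' you route around with Newton polygons admits an elementary two-line fix. What your approach buys is conceptual robustness and a quantitative statement ($v_\ell(c_n)$ descends like $-\mu\log_q n$, read off from the zeros of the product); what it costs is the $p$-adic function theory (Weierstrass preparation/zero-counting on closed subdisks), plus three small points you should pin down: ``exactly $q^s$ zeros'' should be ``at least $q^s$'' (other reciprocal roots may add zeros of the same valuation, which only helps); the identification of the limit of the partial products with $\sum_n c_n x^n$ on closed subdisks needs a one-line uniform estimate, which is available precisely because you argue under the assumed lower bound on $v_\ell(c_n)$; and the equivalence $p\in\mathbf{Z}_\ell[x]\iff v_\ell(\beta_i)\geq 0$ is cleanest via the reversed polynomial $x^dp(1/x)$, which is monic with the $\beta_i$ as roots, hence has integral roots iff its coefficients are integral.
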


\begin{proof}
Let us denote by $d$ the degree of $f$ and write down all coefficients of $f$ in the form $a_i=\frac{p_i}{q_i}$ such that $(p_i, q_i)=1$, and similarly for all coefficient of $F$, let us write down $c_i=\frac{r_i}{t_i}$ with $(r_i, t_i)=1$. If there are some coefficients of $f$ which are rational numbers but not integers, then there exist a prime $p$ and two integers $d_1$ and $d_2$ satisfying :
$$d_1=\max \left\{t| t \in \mathbf{N}, \exists q_i, p^t| q_i\right\}$$
and
$$d_2=\max \left\{t| t \in \mathbf{N},  \exists t_i, p^t| t_i\right\}$$
with $d_1>0, d_2>0$. In fact, because of the hypothesis, there exists $a_i=\frac{p_i}{q_i}$ with $q_i \neq 1$. So there exists a prime $p$ such that $p|q_i$, thus $d_1\neq 0$. Let us suppose $a_j=\frac{p_j}{q_j}$ with the smallest index such that $p^{d_1}|q_j$. Now let us check $$c_j=a_j+\sum_{qk+s=j, k>0}a_kc_s.$$ If $c_j=\frac{r_j}{t_j}$ with $p|t_j$ then $d_2 \geq 1$; otherwise, there are some $a_k,c_j$ such that $p^{d_1}|q_kt_j$, but with the assumption of smallest index, $p^{d_1}\nmid q_k$, so $p|t_j$ thus $d_2 \geq 1$.    

Let $l_1$ be the smallest index such that $p^{d_1}|q_{l_1}$ and similarly let $l_2$ be the smallest index such that $p^{d_2}|s_{l_2}$. Now let us consider the coefficient $c_{l_2q+l_1}$, which can be calculated as
$$c_{l_2q+l_1}=\sum_{0 \leq i \leq d, qj+i=l_2q+l_1} a_ic_j.$$
Let us consider the sum at the right-hand side, for any couple of $(a_i,c_j)$, if $i< t_1$, then $p^{d_1} \nmid q_i$, the maximality of $d_2$ leads to $p^{d_1+d_2} \nmid q_it_j$; similarly, if $i> t_1$, then $j <t_2$ thus $p^{d_2} \nmid t_j$, so that $p^{d_1+d_2} \nmid q_it_j$; but if $i=t_1$, then $j=t_2$, so $p^{d_1} | q_i$ and $p^{d_2} | t_i$. As a result, $p^{d_1+d_2} | c_{t_2q+t_1}$, contradicts the maximality of $d_2$.
\end{proof}

\section{Rational functions generated by infinite products}
Here we consider the following question: for a given polynomial $p$ and an integer $q$, when does $F(x)= \prod_{s=0}^{\infty}p(x^{q^{s}})$ equal a rational function. This question has already been studies in \cite{duke2015} when restricting the polynomial to the cyclotomic case, this section can be considered as a generalization of the previous work. 

\begin{proposition}
Let $p$ be a polynomial taking coefficients over $\mathbf{C}$ and $q$ be an integer larger than $1$, then there is an equivalence between:

(1) $\prod_{s=0}^{\infty}p(x^{q^{s}})$ is a rational function.

(2) there exists a polynomial $Q(x)$ such that $p(x)=\frac{Q(x^q)}{Q(x)}$ and all roots of $Q(x)$ are roots of unity, if $\delta$ is a root of $Q(x)$ then $\delta^{q^t}$ is a root of $Q$ for all $t \in \mathbf{N}$.

\end{proposition}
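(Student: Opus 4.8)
The plan is to prove the two implications separately, working throughout with the standing normalisation $p(0)=1$ (without which the product is not a well-defined element of $\mathbf{C}[[x]]$) and with the single functional equation $F(x)=p(x)F(x^q)$, where $F(x)=\prod_{s=0}^{\infty}p(x^{q^{s}})$; this equation is obtained by factoring the $s=0$ term out of the product. Note also that $F(0)=\prod_{s}p(0)=1$.

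The implication $(2)\Rightarrow(1)$ I would handle by telescoping. Given $p(x)=Q(x^q)/Q(x)$ with $Q(0)\neq0$ — which holds because the roots of $Q$ are roots of unity, hence nonzero — substituting $x\mapsto x^{q^{s}}$ gives $p(x^{q^{s}})=Q(x^{q^{s+1}})/Q(x^{q^{s}})$, so the $N$-th partial product collapses to $Q(x^{q^{N+1}})/Q(x)$. Since $Q(x^{q^{N+1}})\to Q(0)$ in the $x$-adic topology as $N\to\infty$, the product converges to the rational function $Q(0)/Q(x)$. This direction uses only $Q(0)\neq0$; the closure condition plays no role here.

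The substance is in $(1)\Rightarrow(2)$, which I would carry out by elementary polynomial algebra. Write $F=U/V$ in lowest terms with $U(0)=V(0)=1$. Rearranging $p=F/F(x^q)$ into the polynomial identity $p(x)\,V(x)\,U(x^q)=U(x)\,V(x^q)$, and using that $\gcd(U,V)=1$ forces $\gcd(U(x^q),V(x^q))=1$, I read off that $U(x^q)$ divides $U(x)V(x^q)$ and is coprime to $V(x^q)$, hence $U(x^q)\mid U(x)$. Comparing degrees, $q\deg U\le\deg U$, so $U$ is constant, and $U(0)=1$ gives $U\equiv1$. Thus $F=1/V$ has no zeros, and the identity becomes $p(x)V(x)=V(x^q)$, i.e. $V(x)\mid V(x^q)$. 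Setting $Q=V$ then yields $p=Q(x^q)/Q(x)$, and it remains only to verify the arithmetic of the roots.

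For the root structure I would argue that $V(x)\mid V(x^q)$ means every root $\delta$ of $Q=V$ satisfies $V(\delta^q)=0$ (because $\delta$ is then itself a root of $V(x^q)$), so the nonzero root set of $Q$ is closed under $\delta\mapsto\delta^{q}$, giving the required closure $\delta^{q^{t}}\in\{\text{roots of }Q\}$ for all $t$. Since $Q$ has only finitely many roots, each orbit $\{\delta^{q^{t}}\}$ is finite, so $\delta^{q^{a}}=\delta^{q^{b}}$ for some $a<b$; as $\delta\neq0$ this forces $\delta$ to be a root of unity. The main obstacle, and the step I would be most careful about, is the coprimality bookkeeping that pins down $U(x^q)\mid U(x)$: one must keep $F=U/V$ in lowest terms and correctly transfer coprimality through the substitution $x\mapsto x^{q}$ (equivalently, rule out cancellations using that $x\mapsto x^q$ is unramified away from $0$). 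Everything else is degree counting and a finite-orbit argument.
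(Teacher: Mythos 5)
Your proof is correct, and while its skeleton matches the paper's --- functional equation $F(x)=p(x)F(x^q)$, lowest-terms representation, reduction to $F=1/Q$, closure of the root set under $\delta\mapsto\delta^q$ --- your endgame is genuinely different. For the numerator step, the paper writes $F=P/Q$ coprime and derives a common-root contradiction from $P(x^q)\mid P(x)Q(x^q)$ when $\deg P>0$; your version ($\gcd(U(x^q),V(x^q))=1$, hence $U(x^q)\mid U(x)$, hence $q\deg U\le \deg U$) is the same idea in a tighter form. The real divergence is the roots-of-unity step: the paper orders the roots of $Q$ by modulus and argues analytically that all moduli are $1$, ruling out $|r_m|>1$ and $0<|r_1|<1$ via $Q(x)\mid Q(x^q)$ and excluding the root $0$ by a convergence claim about the infinite product, and only then uses closure under $q$-th powers plus finiteness to conclude. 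You instead exclude $0$ for free from the normalization $V(0)=1$ (which lowest terms plus $F(0)=1$ legitimately provides) and finish by pure pigeonhole: the finite root set is closed under $\delta\mapsto\delta^q$, so $\delta^{q^a}=\delta^{q^b}$ for some $a<b$ and $\delta^{q^a(q^{b-a}-1)}=1$. This is more elementary and entirely algebraic, it sidesteps the paper's analytic detour (whose modulus conclusions become a corollary, since roots of unity have modulus $1$), and it avoids the paper's somewhat loosely justified convergence claim for the case $x\mid Q$. Both treatments of $(2)\Rightarrow(1)$ amount to the same telescoping, which the paper leaves implicit as ``straightforward'' and you spell out, correctly noting that only $Q(0)\neq 0$ is needed there.
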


\begin{proof}
(2) implies (1) is straightforward, let us check (1) implies (2).

Let $F(x)= \prod_{s=0}^{\infty}p(x^{q^{s}})$ be a rational function, say $F(x)=\frac{P(x)}{Q(x)}$, where $P(x)$ and $Q(x)$ are coprime, using the functional equation $F(x)=p(x)F(x^q)$, we get $$\frac{P(x)Q(x^q)}{P(x^q)Q(x)}=p(x).$$  As $deg(p(x))>0$, so that $deg(Q(x))>deg(P(x))$, and $P(x^q)|P(x)Q(x^q)$ if $deg(P(x))>0$, then $P(x^q)$ and $Q(x^q)$ should have at least one common root, which contradicts that $P(x)$ and $Q(x)$ are coprime, so we have $$F(x)=\frac{1}{Q(x)}$$ and $$p(x)=\frac{Q(x^q)}{Q(x)}$$

Now let us study the roots of $Q(x)$, let us suppose $0\leq |r_1| \leq |r_2| \leq... \leq |r_m|$ where $r_i$ are the roots of $Q(x)$ and $|r_i|$ is the modulus of $r_i$. Firstly $|r_m|$ can not be too large, if $|r_m|>1$ then each root of $Q(x^q)$ should have a modulus strictly smaller than $|r_m|$, on the other hand $Q(x)|Q(x^q)$, which is impossible. For the same reason, $|r_1|$ can not be a real number between $0$ and $1$. So $|r_i|$ are either $0$ or $1$, but if $x|Q(x)$, the infinite product of $p(x)$ will not converge, so $|r_i|=1$ for all roots of $Q(x)$. Using once more $Q(x)|Q(x^q)$, if $\delta$ is a root of $Q(x)$ then it is a root of $Q(x^q)$ which implies $\delta^q$ is a root of $Q(x)$, we can do it recursively and we obtain $\delta^{q^t}$ is a root of $Q$ for all $t \in \mathbf{N}$, as a corollary, $\delta$ can only be a root of unity. So we prove (2) using (1).

\end{proof}

\section{Infinite product of inverse of polynomials}

In this section, we consider the power sequence defined as follows:
$$F(x)= \prod_{s=0}^{\infty}\frac{1}{p(x^{q^{s}})}=\sum_{i=0}^{\infty}c_ix^i,$$
where $q$ is an integer larger than $1$ and $p=\sum_{i=0}^{n}b_ix^i$ is a polynomial such that $p(0)=1$ defined as before.\\
Such a sequence satisfies the functional equation 
$$F(x)=\frac{1}{p(x)}F(x^q).$$
If we write $\frac{1}{p(x)}=\sum_{i=0}^{\infty} a_ix^i$, then 
$$c_{qn+i}=\sum_{j=0}^{n}a_{qj+i}c_{n-j},$$
for all $n \in\mathbf{N}$ and $i$ such that $0 \leq i \leq q-1$.
\begin{proposition}
If the coefficients of the power series $F(x)= \prod_{s=0}^{\infty}\frac{1}{p(x^{q^{s}})}=\sum_{i=0}^{\infty}c_ix^i$ take finitely many values in $\mathbf{C}$, then the roots of $f$ are all of modulus $1$. 

\end{proposition}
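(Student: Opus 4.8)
The plan is to work analytically with the function $F(x)=\prod_{s=0}^{\infty}\frac{1}{p(x^{q^{s}})}$ and to read the location of the roots of $p$ off the location of the singularities of $F$. First I would record two consequences of the hypothesis. Since the $c_i$ lie in a finite set $S\subset\mathbf{C}$, the sequence is bounded, so $\limsup_i|c_i|^{1/i}\le 1$ and the radius of convergence of $F$ is at least $1$. I would also isolate, for use in the hard part, the elementary but decisive fact that a finite set has no accumulation point, so that any convergent subsequence of $(c_i)_i$ is necessarily eventually constant.

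Next I would locate the singularities. Writing $p(x)=\prod_{j}(1-x/\rho_j)$ (legitimate because $p(0)=1$, so no $\rho_j$ equals $0$), the factor $1/p(x^{q^{s}})$ has poles exactly where $x^{q^{s}}=\rho_j$, that is, at the points $\rho_j^{1/q^{s}}\zeta$ with $\zeta^{q^{s}}=1$, of modulus $|\rho_j|^{1/q^{s}}$. Because every factor is the reciprocal of a polynomial it has no finite zeros, so no pole of one factor can be cancelled by another, and each such point is a genuine pole of $F$. This settles the clean half of the statement: if some root satisfied $|\rho_j|<1$, then, taking $s=0$, $F$ would have a pole at $\rho_j$ inside the open unit disk, forcing the radius of convergence below $1$ and contradicting boundedness. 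Hence every root of $p$ has modulus $\ge 1$.

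The remaining and genuinely harder task is to exclude roots of modulus $>1$, and here boundedness alone does not suffice: already for $q=2$ and $p(x)=1-x/\rho$ with $\rho$ real and larger than $1$, the coefficients are bounded (one checks $c_n=\sum_{\lambda}\rho^{-|\lambda|}$, summed over the binary partitions $\lambda$ of $n$), so the full strength of ``finitely many values'' must enter. The strategy I would pursue is to produce, under the assumption that some $|\rho_j|>1$, a subsequence of $(c_i)$ that converges but is not eventually constant; by the remark above this contradicts finiteness of the value set. The recurrence $c_{qn+i}=\sum_{j}a_{qj+i}c_{n-j}$, where $1/p=\sum_i a_ix^i$, together with the domination $|a_j|\le a_j^{+}$ and hence $|c_n|\le c_n^{+}$ for the positive polynomial $p^{+}(x)=\prod_j(1-x/|\rho_j|)$, controls the situation for the real positive model: there all $a_j^{+}$ and $c_n^{+}$ are positive, and the recurrence gives $c^{+}_{q^{m}}=c^{+}_{q^{m-1}}+\sum_{j\ge 1}a^{+}_{qj}c^{+}_{q^{m-1}-j}>c^{+}_{q^{m-1}}$, so $c^{+}_{q^{m}}$ is strictly increasing and therefore already takes infinitely many distinct values. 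Thus a polynomial whose roots are all real and positive with at least one exceeding $1$ violates the conclusion, as required.

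The main obstacle is the passage from this positive model to the actual complex sequence: the bound $|c_n|\le c_n^{+}$ transfers boundedness but says nothing about the number of distinct complex values attained by $(c_i)$, and in the complex case the monotonicity that drove the argument is lost to cancellation. To close this gap I would analyse the subsequence $c_{q^{m}}$ directly, using the iterated functional equation $F(x)=F(x^{q^{m+1}})/\prod_{s=0}^{m}p(x^{q^{s}})$ to extract its limit as $m\to\infty$ and to show that it approaches that limit without reaching it; equivalently, that the contribution of the large root $\rho_j$ — whose poles $\rho_j^{1/q^{s}}\zeta$ accumulate at the unit circle from outside, at moduli $|\rho_j|^{1/q^{s}}\downarrow 1$ — perturbs $c_{q^{m}}$ by an amount that tends to $0$ yet is nonzero for infinitely many $m$. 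Turning this dominant‑singularity estimate into a rigorous statement for arbitrary complex roots, where no sign or monotonicity is available, is where I expect the real work to lie.
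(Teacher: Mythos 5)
Your first half is correct and is essentially the paper's argument: finitely many values implies bounded coefficients, hence radius of convergence at least $1$, while a root of modulus less than $1$ forces a pole of $F$ inside the unit disk. (The paper implements the no-cancellation point by choosing $\alpha$ of \emph{smallest} modulus among the roots, so that $p(\alpha^{q^s})\neq 0$ for all $s\geq 1$ and only the $s=0$ factor blows up; your remark that reciprocals of polynomials have no zeros serves the same purpose.) Your observation that boundedness alone cannot exclude roots of modulus greater than $1$ is also correct and well taken. But the second half of your proposal is a genuine gap, and you say so yourself: you complete only the special case where all roots of $p$ are real and positive (via monotonicity of $c^{+}_{q^m}$), and for arbitrary complex roots you offer a plan --- extract the limit of $c_{q^m}$ from the iterated functional equation and show the contribution of the outer poles is ``nonzero for infinitely many $m$'' --- without any mechanism to rule out the cancellation you yourself identify as the obstacle. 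Nothing in the proposal prevents that contribution from vanishing or oscillating in sign along your subsequence, so the sketch does not constitute a proof of the only case that carries the content of the statement.

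The idea you are missing is how the paper converts ``finitely many values'' into two usable constants: the \emph{largest} modulus $|a|$ and the \emph{smallest nonzero} modulus $|b|>0$ of the values --- the positivity of $|b|$ is exactly the quantitative surplus over boundedness that you correctly sensed was needed, and it exists only because the value set is finite. Given a root $\beta$ with $|\beta|>1$, the paper chooses $t$ with $|\beta|^{q^t}>|a|/|b|+1$ and convolves the coefficients of the tail $\prod_{s\geq t}1/p(x^{q^s})=F(x^{q^t})$ (which again take finitely many values, being a polynomial multiple of $F$) against the geometric sequence $(\beta^{q^tj})_j$ attached to the root, obtaining $d_{q^ti}=\sum_{j=0}^{i}\beta^{q^tj}c_{q^t(i-j)}$ with $|d_{q^ti}|\geq |b|\,|\beta|^{q^ti}-|a|\sum_{j=0}^{i-1}|\beta|^{q^tj}>0$, which tends to infinity: the single dominant term beats the geometric sum of all others purely by the choice of $t$, with no signs, no monotonicity, and no asymptotic analysis of $c_{q^m}$. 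The contradiction is then ``unbounded yet finitely valued,'' which requires no limit extraction at all, unlike your ``convergent but not eventually constant'' scheme. (The paper's own justification that the auxiliary coefficients $d_i$ take finitely many values is admittedly terse --- the geometric factor is not literally a polynomial --- but the domination-by-extremal-moduli device is the ingredient your proposal lacks, and it is where the finite-value hypothesis actually does its work.)
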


\begin{proof}

Firstly, let us prove that the moduli of all roots of $p$ are not smaller than $1$. Otherwise, let us chose one of those which have smallest modulus, say $\alpha$, because of the above definition, we can conclude that 

$$p(\alpha^k) \neq 0$$
for all $k$ larger than $1$.

Let us consider the equality,

$$\prod_{s=0}^{\infty}\frac{1}{p(x^{q^{s}})}=\sum_{i=0}^{\infty}c_ix^i,$$
the right-hand side converges when $x$ tends to $\alpha$ while the left-hand side diverges, in fact
$\prod_{s=1}^{\infty}\frac{1}{p(\alpha^{q^{s}})}$ converges to a non-zero value because

$$\log(\prod_{s=1}^{\infty}\frac{1}{p(\alpha^{q^{s}})})=-\sum_{s=1}^{\infty}\log(p(\alpha^{q^{s}}))$$
which converges, however, $\frac{1}{p(x^{q^{s}})}$ has a pole at $x= \alpha$.\\

Secondly, let us prove that the moduli of all roots of $p$ are not larger than $1$. Otherwise, let us chose one of them, say $\beta$, and an integer $t$ such that $|\beta|^{q^t} >|a|/|b|+1$, where $|a|$ is the largest modulus of the sequence $(c_i)_{i \in \mathbf{N}}$ and $|b|$ is the smallest non-zero modulus of this sequence. Now consider the following series

$$\frac{1}{1-\beta}\prod_{s=t}^{\infty}\frac{1}{p(x^{q^{s}})}=\sum_{i=0}^{\infty}d_ix^i.$$
It is easy to see that $\left\{d_i|i \in \mathbf{N}\right\}$ is finite, because such a series can be obtained by multiplying a polynomial to $F(x)$, but on the other hand, we have the inequality,

$$|d_{q^ti}|=|\sum_{j=0}^{i}\beta^{q^tj}c_{q^t(i-j)}| \geq -|a|\sum_{j=0}^{i-1}|\beta^{q^tj}|+|b||\beta^{q^ti}|>0$$
which diverges. This contradicts the fact that $\left\{d_i|i \in \mathbf{N}\right\}$ is finite. In conclusion, the roots of $f$ are all of modulus $1$.

\end{proof}

\begin{proposition}
If the power series $F(x)= \prod_{s=0}^{\infty}\frac{1}{p(x^{q^{s}})}=\sum_{i=0}^{\infty}c_ix^i$ is a $q$-regular sequence, then the roots of $p$ are all roots of unity, furthermore, the order of each root is multiple of $q$. 

\end{proposition}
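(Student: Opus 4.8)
The plan is to play the two functional equations that $F$ must satisfy against each other. On the one hand, the hypothesis that $(c_n)_{n\in\mathbf{N}}$ is $q$-regular means, by the same module-theoretic argument used for the functional-equation criterion at the beginning of the paper (that argument only needs the module generated by the $q$-kernel to be finitely generated, not the kernel itself to be finite), that $F$ is a $q$-Mahler function: there exist polynomials $a_0,\dots,a_m$ with $a_0a_m\neq 0$ and $\sum_{i=0}^m a_i(x)F(x^{q^i})=0$. On the other hand the product itself gives $F(x^q)=p(x)F(x)$, and iterating, $F(x^{q^i})=F(x)\prod_{s=0}^{i-1}p(x^{q^s})$. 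Since $F(0)=1$, $F$ is a nonzero element of the domain $\mathbf{C}[[x]]$, so I may substitute and cancel $F$ to obtain the polynomial identity
$$\sum_{i=0}^m a_i(x)\prod_{s=0}^{i-1}p(x^{q^s})=0.$$
All the root information of $p$ is now encoded in this single identity, which is what I want to exploit.

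First I would extract the elementary algebraic consequences. Writing $P_i(x)=\prod_{s=0}^{i-1}p(x^{q^s})$ and using $P_i(x)=p(x)P_{i-1}(x^q)$ for $i\ge 1$, every term except $a_0(x)$ carries a factor $p(x)$; hence $p(x)\mid a_0(x)$, and after dividing out and repeating, $p(x^{q^s})$ is forced to divide a fixed combination of the $a_i$ at each level $s$. This encodes the backward $z\mapsto z^q$–orbit of the roots of $p$ into the fixed polynomials $a_0,\dots,a_m$. In parallel I would use the growth side: a $q$-regular sequence over $\mathbf{C}$ grows at most polynomially, so the radius of convergence of $F$ is at least $1$. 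As in the proof of the previous proposition, a root of $p$ of modulus $<1$ would produce a genuine pole of $F$ inside the unit disc (the tail $\prod_{s\ge 1}1/p(x^{q^s})$ converging to a nonzero value there), contradicting this; combined with the companion estimate of the previous proposition for the modulus $>1$ case, this pins every root of $p$ to the unit circle.

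The heart of the matter is to upgrade ``modulus $1$'' to ``root of unity of order divisible by $q$'', and here I would argue dynamically. For a root $\alpha$ of $p$, the factor $1/p(x^{q^s})$ contributes a pole at $x=\alpha$ exactly when $\alpha^{q^s}$ is again a root of $p$. If this happens for infinitely many $s$ — which is precisely the case in which the forward orbit $\alpha,\alpha^{q},\alpha^{q^2},\dots$ under $\sigma\colon z\mapsto z^{q}$ returns to the finite root set infinitely often, i.e. when $\alpha$ is $\sigma$-periodic — then the aligned accumulation of poles at $\alpha$ forces the coefficients of $F$ to grow like a generalized partition function, super-polynomially, as one already sees for $p(x)=1-x$, where $F=\prod_s 1/(1-x^{q^s})$ has $q$-ary partition coefficients. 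This contradicts polynomial growth. Hence for every root the forward orbit meets the root set only finitely often; in particular each root is pre-periodic for $\sigma$, which on $\mathbf{C}^\ast$ means it is a root of unity, and it must be a strictly pre-periodic (transient) one, so its order is not coprime to $q$ — which for $q$ prime is exactly divisibility by $q$. Mirroring the closure step in the rationality criterion of Section 5 (``$\delta^{q^t}$ is again a root''), this fixes the ambient $\sigma$-structure and yields the statement.

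The main obstacle is precisely the analytic implication of the previous paragraph: that infinitely many aligned poles on the unit circle — or, for an $\alpha$ that is not a root of unity at all, a forward orbit dense on the circle that drives the partial products unboundedly large — are genuinely incompatible with the polynomial coefficient growth forced by $q$-regularity. The structured periodic case can be handled by comparison with the partition asymptotics above, but the non-root-of-unity case requires controlling the infinite product at a point where it fails to converge (the unit circle being a natural boundary of $F$) and converting that into a lower bound on $|c_n|$; this is where the $q$-regularity hypothesis, rather than mere boundedness, must do the real work, and where I would expect to need the finer description of the singular behaviour of Mahler functions from \cite{dumas}. The purely algebraic identity of the first paragraph is a convenient bookkeeping device, but since backward orbits of roots of unity are themselves infinite it cannot by itself deliver the required finiteness; the growth input is indispensable.
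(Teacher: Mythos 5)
Your proposal has two genuine gaps, both at the pivotal steps. First, the reduction of ``modulus $1$'' to ``root of unity'' does not go through as stated: from ``the forward orbit of $\alpha$ under $\sigma\colon z\mapsto z^q$ meets the finite root set of $p$ only finitely often'' you infer ``each root is pre-periodic'', which is a non sequitur --- \emph{any} point with infinite orbit (in particular any non-root-of-unity on the circle) meets a finite set finitely often, so the dichotomy you establish carries no information about pre-periodicity. You concede this yourself in the last paragraph by deferring the non-root-of-unity case to ``the finer description of the singular behaviour of Mahler functions'' from \cite{dumas}; but that case is the entire content of the first half of the proposition, so at this point the proposal is a plan rather than a proof. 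The paper sidesteps all boundary analysis with a short algebraic trick fully available to you: from $F(x)=F(x^q)/p(x)$ one gets $(\log F(x))'-(\log F(x^q))'=-p'(x)/p(x)$; since $F'$ is $q$-regular and $1/F=\prod_{s\geq 0}p(x^{q^s})$ is $q$-regular (product of a polynomial along powers, by \cite{dumas}), and products of $q$-regular series are $q$-regular, the rational function $p'/p$ is $q$-regular, and Theorem 3.3 of \cite{Allouche} then forces its poles --- the roots of $p$ --- to be roots of unity. No growth estimates on the unit circle are needed.

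Second, the divisibility statement. Even granting your heuristic that a $\sigma$-periodic root produces partition-like, super-polynomial coefficient growth (you support this only by analogy with $p(x)=1-x$, and the comparison is not carried out), your conclusion is that the order of each root is ``not coprime to $q$'', and you note yourself that this equals divisibility by $q$ only for $q$ prime, whereas the proposition is stated for all $q\geq 2$. The periodic case --- order coprime to $q$, i.e.\ $\alpha^{q^t}=\alpha$ for some $t$ --- is precisely the case the paper must and does handle, not by asymptotics but by a Becker-style Cartier-operator argument following \cite{BECKER}: one chooses section operators $A_{t,i}$ that do not decrease the pole order at $\alpha$, sets $s_{i+1}=A_{I_i}(s_i/p)$ so that $A_{I_i}\cdots A_{I_0}(F)=s_{i+1}F$ with $\mathrm{ord}(s_i)$ strictly increasing, and obtains infinitely many linearly independent elements in the module generated by the $q$-kernel of $F$, contradicting $q$-regularity. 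Your opening Mahler-equation identity $\sum_{i=0}^m a_i(x)\prod_{s=0}^{i-1}p(x^{q^s})=0$ and its divisibility bookkeeping are correct as far as they go (and your exclusion of roots of modulus less than $1$ is essentially sound), but note also that the modulus-greater-than-$1$ estimate you borrow from the previous proposition used coefficients taking finitely many values, and must be re-done for polynomially bounded coefficients. In sum: a different route from the paper, with the two hardest steps --- non-roots-of-unity on the circle, and periodic roots for general $q$ --- left unproved.
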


\begin{proof}
 If $F(x)= \prod_{s=0}^{\infty}\frac{1}{p(x^{q^{s}})}=\sum_{i=0}^{\infty}c_ix^i$ is a $q$-regular sequence, then $F'(x) =\sum_{i=1}^{\infty}c_iix^{i-1}$ is also $q$-regular. On the other hand, we know $\frac{1}{F(x)}=\prod_{s=0}^{\infty}p(x^{q^{s}})$ is $q$-regular, so
 
 $$\frac{F'(x)}{F(x)}=(\log F(x))'$$
 is $q$-regular. In the same way we have $(\log F(x^q))')$ is $q$-regular so that 
 
 $$(\log F(x))'-(\log F(x^q))'=\frac{p'(x)}{p(x)}$$
 is $q$-regular, then we conclude by Theorem 3.3 \cite{Allouche} that all roots are roots of unity.\\
 
 To prove the second part, we use a method introduced in \cite{BECKER}. We firstly define some notation. Let us denote by $A_{t,i}$ the operator of power series:
 
 $$A_{t,i}(\sum_{j=0}^{\infty}a_jx^j)=\sum_{j=0}^{\infty}a_{q^tj+i}x^{q^tj+i}$$
 for all $i$ such that $0 \leq i \leq q^t-1$.\\
 
 If there exists a root of $p$ which's order is not a multiple of $q$, say $\alpha$, then for all formal power series $f$, let us define $ord(f(x))$ to be the order of pole of $f$ at point $\alpha$. It is easy to check that there exists a $t \in \mathbf{N}$ such that for all $f \in F[[x]]$, $ord(f(x))=ord(f(x^{q^t}))$ so there are some $i$ such that $ord(f(x)) \leq ord(A_{t,i}(f(x)))$.\\
 
 Now let us define a sequence of power series $(s_i)_{i \in \mathbf{N}}$ and a sequence of integer $(I_i)_{i \in \mathbf{N}}$ such that $s_0=1$, $0 \leq I_i \leq q^t-1, \forall i$ and $ord(A_{I_i}(\frac{s_i}{p(x)})) \geq ord(\frac{s_i}{p(x)})$ and we define $s_{i+1}=A_{I_i}(\frac{s_i}{p(x)})$, so we can easily check
 
 $$A_{I_i}(s_iF(x))=A_{I_i}(\frac{s_i}{p(x)})F(x)=s_{i+1}F(x),$$
 and by induction
 
 $$A_{I_i}A_{I_{i-1}}...A_{I_0}(F(x))=s_{i+1}F(x).$$
 However, $$ord(s_i) < ord(s_{i+1}),$$ the sequence $s_i$ are linearly independent, so $F(x)$ can not be a regular sequence. 
\end{proof}

\begin{theorem}
If the power series $F(x)= \prod_{s=0}^{\infty}\frac{1}{p(x^{q^{s}})}=\sum_{i=0}^{\infty}c_ix^i$ is a $q$-regular sequence, then there exists a polynomial $Q(x)$ such that $p(x)| \frac{Q(x^q)}{Q(x)}$, so $F(x)$ can be written as 

$$F(x)=Q(x)\prod_{i=1}^{\infty}R(x^q),$$

where $R(x)=\frac{Q(x^q)}{Q(x)F(x)}$, which is a polynomial.
\end{theorem}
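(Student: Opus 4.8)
The plan is to construct the polynomial $Q$ by prescribing, for each root of unity, its multiplicity as a root of $Q$; this turns the whole statement into a system of inequalities on these multiplicities, after which the product formula follows from a telescoping computation. First I would invoke the preceding Proposition 6.2: since $F$ is $q$-regular, every root of $p$ is a root of unity whose order is divisible by $q$. The dynamical reformulation I will use is that under the map $\sigma:\eta\mapsto\eta^{q}$ on roots of unity, a periodic point is precisely a root of unity of order coprime to $q$ (because $\eta^{q^{k}}=\eta$ forces $\mathrm{ord}(\eta)\mid q^{k}-1$, an integer coprime to $q$). Consequently each root of $p$ is strictly pre-periodic: its forward orbit $\delta,\delta^{q},\delta^{q^{2}},\dots$ traverses a nonempty tail before landing on a cycle of order coprime to $q$, and no root of $p$ is itself periodic. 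This non-periodicity is exactly what will make the construction possible.

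Second, I would reduce to combinatorics. Write $p(x)=\prod_\eta (x-\eta)^{\mu_\eta}$ and seek $Q(x)=\prod_\eta (x-\eta)^{e_\eta}$, both finite products over roots of unity with $\mu$ finitely supported. Since the roots of $Q(x^{q})$ are exactly the $q$-th roots of the roots of $Q$, the multiplicity of a root of unity $\eta$ in $Q(x^{q})$ is $e_{\eta^{q}}$, whereas its multiplicities in $Q(x)$ and in $p(x)$ are $e_\eta$ and $\mu_\eta$. Hence the divisibility $p(x)Q(x)\mid Q(x^{q})$ — which is the assertion ``$Q\mid Q(x^{q})$ together with $p\mid Q(x^{q})/Q(x)$'' — is equivalent to the system
$$e_{\eta^{q}}\ \ge\ e_\eta+\mu_\eta \qquad\text{for every root of unity }\eta,$$
so everything reduces to exhibiting a finitely supported, nonnegative integer solution $(e_\eta)$.

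Third, and this is the heart of the matter, I would set $O^{+}(\delta)=\{\delta^{q^{k}}:k\ge 1\}$ and define
$$e_\eta=\sum_{\delta\,:\,\eta\in O^{+}(\delta)}\mu_\delta .$$
Only finitely many $\delta$ carry $\mu_\delta\neq 0$ and each $O^{+}(\delta)$ is finite, so $e$ is finitely supported and nonnegative. For the inequality I would observe that $\eta\in O^{+}(\delta)$ implies $\eta^{q}\in O^{+}(\delta)$, so $\{\delta:\eta\in O^{+}(\delta)\}\subseteq\{\delta:\eta^{q}\in O^{+}(\delta)\}$ and every contribution to $e_{\eta^{q}}-e_\eta$ is nonnegative; then I would isolate the index $\delta=\eta$, noting that $\eta^{q}\in O^{+}(\eta)$ always holds while $\eta\in O^{+}(\eta)$ would make $\eta$ periodic, which is excluded for a root of $p$ by the first step. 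Thus the index $\delta=\eta$ lies in the difference set and forces $e_{\eta^{q}}-e_\eta\ge\mu_\eta$. I would stress that non-periodicity is precisely what averts the fatal obstruction: summing the inequality around any cycle telescopes to $0$, so a solution exists only if $\mu$ vanishes on cycles, which holds here exactly because the orders are divisible by $q$. This step is the main obstacle; the rest is bookkeeping.

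Finally, normalizing $Q$ so that $Q(0)=1$ (the roots are nonzero, so this is harmless), I would take $R(x)=Q(x^{q})/\bigl(p(x)Q(x)\bigr)$, which is a genuine polynomial by the previous step and satisfies $R(0)=1$ since $p(0)=1$; this is the factor intended in the statement. A direct telescoping of the partial products gives
$$Q(x)\prod_{i=0}^{N}R\bigl(x^{q^{i}}\bigr)=\frac{Q\bigl(x^{q^{N+1}}\bigr)}{\prod_{i=0}^{N}p\bigl(x^{q^{i}}\bigr)},$$
and letting $N\to\infty$ in the formal topology the numerator tends to $Q(0)=1$ while the denominator tends to $1/F(x)$, yielding $F(x)=Q(x)\prod_{i=0}^{\infty}R\bigl(x^{q^{i}}\bigr)$. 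The reduction in the second step and this telescoping are routine once the combinatorial solution of the third step is in hand.
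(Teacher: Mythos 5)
Your proposal is correct, but there is nothing in the paper to measure it against: the theorem is stated with no proof at all (the source passes directly from the theorem environment to the Applications section), so your argument fills a genuine gap rather than paralleling an existing one. Your route is also the natural completion of the paper's visible plan: you invoke the immediately preceding proposition (all roots of $p$ are roots of unity whose order is a multiple of $q$), observe correctly that the periodic points of $\eta\mapsto\eta^{q}$ on roots of unity are exactly those of order coprime to $q$, so no root of $p$ is periodic, translate the required divisibility $p(x)Q(x)\mid Q(x^{q})$ into the multiplicity system $e_{\eta^{q}}\ge e_{\eta}+\mu_{\eta}$, and solve it by $e_{\eta}=\sum_{\delta:\,\eta\in O^{+}(\delta)}\mu_{\delta}$; the verification is sound, since the containment $\{\delta:\eta\in O^{+}(\delta)\}\subseteq\{\delta:\eta^{q}\in O^{+}(\delta)\}$ holds in general and strict pre-periodicity of the roots of $p$ is exactly what puts the index $\delta=\eta$ in the difference set whenever $\mu_{\eta}>0$. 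Your side remark that the inequality telescopes to $0$ around any cycle, so that $\mu$ must vanish on periodic points, correctly identifies why the order-divisible-by-$q$ hypothesis is indispensable and not merely convenient. The concluding telescoping with the normalizations $Q(0)=R(0)=1$ is likewise in order. Two things you did tacitly deserve to be made explicit: you corrected two defects in the statement itself, reading $R(x)=Q(x^{q})/(Q(x)F(x))$ as $R(x)=Q(x^{q})/\bigl(p(x)Q(x)\bigr)$ and the product $\prod_{i=1}^{\infty}R(x^{q})$ as $\prod_{i=0}^{\infty}R(x^{q^{i}})$ --- and your telescoping identity shows these are the only readings under which the claim is true. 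Your construction is in the spirit of \cite{BECKER}, which the paper cites in the proof of the preceding proposition; an argument of exactly this kind is almost certainly what the author intended and omitted.
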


\section{Applications}

In this section we will consider some examples of automatic power series of type
$$F(x)=\prod_{s=0}^{\infty}p(x^{l^{s}})=\sum_{i=1}^{\infty}c_ix^i,$$
where $p$ is a polynomial of degree $d$ with coefficients in $\mathbf{Q}$ and $l \geq 2$. It has been proved by Proposition 4.5 that the number of such polynomials $p$ is fixed once given the degree $d$ of the polynomial and $l$. But when $l$ and $d$ are both large, it will be difficult to compute the semi-group of matrix discussed in Section 4.2. Here we show a method applied on a particular example to generate the couples $(p,l)$ such that $\prod_{s=0}^{\infty}p(x^{l^{s}})=\sum_{i=1}^{\infty}c_ix^i$ is an automatic power sequence.

Let us consider firstly the power series $F_1(x)$ defined by $p_1(x)=1+x-x^3-x^4$ and $l=2$, it is easy to check that

$$F_1(x)=\prod_{s=0}^{\infty}p_1(x^{2^{s}})=\prod_{s=0}^{\infty}(1+x^{2^{s}})\prod_{s=0}^{\infty}(1-(x^3)^{2^{s}}).$$
And it is well known that $\prod_{s=0}^{\infty}(1+x^{2^{s}})=\frac{1}{1-x}=\sum_{i=1}^{\infty}x^i$ and $\prod_{s=0}^{\infty}(1-x^{2^{s}})=\sum_{i=1}^{\infty}b_nx^i$, where $(b_n)_{n \in \mathbf{N}}$ is the Thue-Morse sequence beginning with $1,-1$. So the coefficient of term $x^n$ in $F_1(x)$, say $f_1(n)$, can be calculated by 

$$f_1(n)=\sum_{3i\leq n}b_i.$$ 
The sequence $(f_1(n))_{n \in \mathbf{N}}$ is bounded because of the fact that $b_{2n+1}+b_{2n}=0$, so $F_1(x)$ is a $2$-automatic power sequence. Moreover the transition matrices $\Gamma_1$ and $\Gamma_0$ can be defined by

 $$\begin{pmatrix} 
c_{2n} \\
c_{2n-1}\\ 
c_{2n-2}\\
c_{2n-3} \\
\end{pmatrix}=
\Gamma_1\begin{pmatrix} 
c_{n} \\
c_{n-1}\\ 
c_{n-2}\\
c_{n-3} \\
\end{pmatrix}
=
\begin{pmatrix} 
1&0&-1&0\\
0&1&-1&0\\
0&1&0&-1\\
0&0&1&-1\\
\end{pmatrix}
\begin{pmatrix} 
c_{n} \\
c_{n-1}\\ 
c_{n-2}\\
c_{n-3} \\
\end{pmatrix}$$

 $$\begin{pmatrix} 
c_{2n+1} \\
c_{2n}\\ 
c_{2n-1}\\
c_{2n-2} \\
\end{pmatrix}=
\Gamma_0\begin{pmatrix} 
c_{n} \\
c_{n-1}\\ 
c_{n-2}\\
c_{n-3} \\
\end{pmatrix}=
\begin{pmatrix} 
1&-1&0&0\\
1&0&-1&0\\
0&1&-1&0\\
0&1&0&-1\\
\end{pmatrix}
\begin{pmatrix} 
c_{n} \\
c_{n-1}\\ 
c_{n-2}\\
c_{n-3} \\
\end{pmatrix}.$$

Remarking that 

$$\Gamma_0^2=\begin{pmatrix} 
1&-1&-1&1\\
0&0&-1&1\\
0&1&-2&1\\
0&1&-1&0\\
\end{pmatrix}, 
\Gamma_1\Gamma_0=\begin{pmatrix} 
1&-1&0&0\\
1&-1&-1&1\\
0&0&-1&1\\
0&1&-2&1\\
\end{pmatrix},$$
$$\Gamma_0\Gamma_1=\begin{pmatrix} 
1&-2&1&0\\
1&-1&0&0\\
1&-1&-1&1\\
0&0&-1&1\\

\end{pmatrix},
\Gamma_1^2=\begin{pmatrix} 
0&-1&1&0\\
1&-2&1&0\\
1&-1&0&0\\
1&-1&-1&1\\
\end{pmatrix},$$
let us consider the the power series $F_2(x)$ defined by $p_2(x)=1+x+x^2-x^4-x^5-2x^6-x^7-x^8+x^{10}+x^{11}+x^{12}=(x^2+x+1)(x^6-1)(x^4-1)$ and $l=4$, the transition matrices of this polynomial are

$$\alpha_0=\begin{pmatrix} 
1&-1&-1&1&0\\
0&0&-1&1&0\\
0&1&-2&1&0\\
0&1&-1&0&0\\
0&1&-1&-1&1\\
\end{pmatrix}
\alpha_1=\begin{pmatrix} 
1&-1&0&0&0\\
1&-1&-1&1&0\\
0&0&-1&1&0\\
0&1&-2&1&0\\
0&1&-1&0&0\\
\end{pmatrix}$$

$$\alpha_2=\begin{pmatrix} 
1&-2&1&0&0\\
1&-1&0&0&0\\
1&-1&-1&1&0\\
0&0&-1&1&0\\
0&1&-2&1&0\\
\end{pmatrix}
\alpha_3=\begin{pmatrix} 
0&-1&1&0&0\\
1&-2&1&0&0\\
1&-1&0&0&0\\
1&-1&-1&1&0\\
0&0&-1&1&0\\
\end{pmatrix}$$
If we define a sequence of matrices $(\alpha_n)_{n \in \mathbf{N}}$ by $\alpha_{4n+i}=\alpha_n\alpha_i, 0\leq i \leq 3$, then the $n$-th coefficient of $F_2(x)$ is $f_2(n)=\alpha_n(1,1)$. However the matrices $\alpha_i$ for $i=0,1,2,3$ are all of form $\begin{pmatrix} 
A_i&0\\
B_i&C_i\\
\end{pmatrix}$ with $A_i$ of size $4 \times 4$, $B_i$ of size $4 \times 1$, $C_i$ of size $1 \times 1$ and $0$ the $0$-matrix of size $1 \times 4$, so $\alpha_n(1,1)$ can be calculated only by the multiplications between $A_i$. Remarking that this four matrices are nothing else then $\Gamma_0^2, \Gamma_1\Gamma_0, \Gamma_0\Gamma_1, \Gamma_0^2$, we conclude that the sequence $(f_2(n))_{n \in \mathbf{N}}$ is bounded so $4$-automatic.

By the same method, the power series $F_3(x)$ defined by $p_3(x)=1+x+x^2-x^4-x^5+x^7+x^8-x^{10}-x^{11}-x^{12}=(x^2+x+1)(x^6+1)(1-x^4)$ and $l=4$ is also $4$-automatic. In fact, its transition matrices are

$$\beta_0=\begin{pmatrix} 
1&-1&1&-1&0\\
0&0&1&-1&0\\
0&1&0&-1&0\\
0&1&-1&0&0\\
0&1&-1&1&-1\\
\end{pmatrix}
\beta_1=\begin{pmatrix} 
1&-1&0&0&0\\
1&-1&1&-1&0\\
0&0&1&-1&0\\
0&1&0&-1&0\\
0&1&-1&0&0\\
\end{pmatrix}$$

$$\beta_2=\begin{pmatrix} 
1&0&-1&0&0\\
1&-1&0&0&0\\
1&-1&1&-1&0\\
0&0&1&-1&0\\
0&1&0&-1&0\\
\end{pmatrix}
\beta_3=\begin{pmatrix} 
0&1&-1&0&0\\
1&0&-1&0&0\\
1&-1&0&0&0\\
1&-1&1&-1&0\\
0&0&1&-1&0\\
\end{pmatrix}$$
and once more they are of form $\begin{pmatrix} 
A_i&0\\
B_i&C_i\\
\end{pmatrix}$ with $A_0=-\Gamma_0\Gamma_0\Gamma_1\Gamma_1,A_1=-\Gamma_1\Gamma_0\Gamma_0\Gamma_0,A_2=\Gamma_0\Gamma_1\Gamma_0\Gamma_0, A_3=\Gamma_1\Gamma_1\Gamma_0\Gamma_0$.

Furthermore, as $$\prod_{s=0}^{\infty}((x^2)^{4^s}+1)(x^{4^s}+1)=\prod_{s=0}^{\infty}\frac{(x^4)^{4^s}-1}{x^{4^s}-1}=\frac{1}{1-x}$$

we have $$(1-x)F_2(x)=\prod_{s=0}^{\infty}((x^2)^{4^s}+x^{4^s}+1)((x^6)^{4^s}-1)\frac{(x^4)^{4^s}-1}{((x^2)^{4^s}+1)(x^{4^s}+1)}=\prod_{s=0}^{\infty}(x^9)^{4^s}-(x^6)^{4^s}-(x^3)^{4^s}+1,$$
$$(1-x)F_3(x)=\prod_{s=0}^{\infty}((x^2)^{4^s}+x^{4^s}+1)((x^6)^{4^s}+1)\frac{(x^4)^{4^s}-1}{((x^2)^{4^s}+1)(x^{4^s}+1)}=\prod_{s=0}^{\infty}-(x^9)^{4^s}+(x^6)^{4^s}-(x^3)^{4^s}+1.$$

\begin{proposition}
The power series $$F_2(x)=\prod_{s=0}^{\infty}((x^2)^{4^s}+x^{4^s}+1)((x^6)^{4^s}-1)((x^4)^{4^s}-1)$$ and  $$F_3(x)=\prod_{s=0}^{\infty}((x^2)^{4^s}+x^{4^s}+1)((x^6)^{4^s}+1)(-(x^4)^{4^s}+1)$$ are $4$-automatic.
\end{proposition}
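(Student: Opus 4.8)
The plan is to reduce both claims to the finiteness of the semigroup $S=\langle\Gamma_0,\Gamma_1\rangle$ generated by the two $4\times4$ transition matrices of $F_1$. This finiteness is already available: $F_1$ was shown to be $2$-automatic, so by the semigroup criterion (Proposition 4.3) the matrices $\Gamma_0,\Gamma_1$ generate a finite semigroup. First I would record the structural fact established in the computations just above the statement, namely that every transition matrix of $F_2$ is block lower-triangular, $\alpha_i=\begin{pmatrix}A_i&0\\ B_i&C_i\end{pmatrix}$ with $A_i$ of size $4\times4$, and that the corner blocks are precisely the two-letter words $A_0=\Gamma_0^2$, $A_1=\Gamma_1\Gamma_0$, $A_2=\Gamma_0\Gamma_1$, $A_3=\Gamma_1^2$ in $\Gamma_0,\Gamma_1$; for $F_3$ the same holds with $A_i$ a signed four-letter word such as $A_0=-\Gamma_0^2\Gamma_1^2$. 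In either case each $A_i$ lies in $S$, respectively in $S\cup(-S)$.

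The key step is then to observe that block lower-triangular matrices are closed under multiplication and that the corner block of a product is the product of the corner blocks. Hence, writing $n$ in base $4$ as $\overline{d_m\cdots d_0}$, the matrix $\alpha_n=\alpha_{d_m}\cdots\alpha_{d_0}$ retains the block form and its $(1,1)$ entry equals the $(1,1)$ entry of $A_{d_m}\cdots A_{d_0}$; since $f_2(n)=\alpha_n(1,1)$, this gives $f_2(n)=(A_{d_m}\cdots A_{d_0})(1,1)$. Because each $A_{d_j}\in S$ and $S$ is a finite semigroup, the product $A_{d_m}\cdots A_{d_0}$ stays inside the finite set $S$ (inside $S\cup(-S)$ for $F_3$). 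Thus the matrix-valued sequence $n\mapsto A_{d_m}\cdots A_{d_0}$ assumes only finitely many values, and $f_2$ (resp. $f_3$) is obtained from it by reading off the $(1,1)$ coordinate.

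To conclude $4$-automaticity I would check directly that the $4$-kernel of $(f_2(n))_n$ is finite, in the spirit of the semigroup criterion. For $0\le b<4^l$ the digits of $4^ln+b$ are those of $n$ followed by the $l$ low digits of $b$, so $\alpha_{4^ln+b}=\alpha_n\,\alpha^{(b)}$ for a fixed matrix $\alpha^{(b)}$ depending only on $b$; taking corners, $f_2(4^ln+b)=\big((A_{d_m}\cdots A_{d_0})\,A^{(b)}\big)(1,1)$, where $A^{(b)}\in S$. Since $A^{(b)}$ ranges over the finite set $S$ as $(l,b)$ vary, the subsequences $(f_2(4^ln+b))_n$ fall into at most $|S|$ classes, so the $4$-kernel is finite and $(f_2(n))_n$ is $4$-automatic. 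The identical argument, with $S$ replaced by the finite set $S\cup(-S)$, settles $(f_3(n))_n$.

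I expect the main difficulty to be conceptual rather than computational, since all the matrices and the block identifications are already exhibited before the statement. The delicate point is the legitimacy of discarding the $B_i,C_i$ blocks: one must argue that the output coordinate $(1,1)$ never depends on them, so that finiteness of the full semigroup $\langle\alpha_0,\dots,\alpha_3\rangle$ is not needed and it suffices that the corner projection lands in the finite set $S$ (resp. $S\cup(-S)$). A secondary bookkeeping task is to confirm the corner identifications — correcting the apparent typo that lists $A_3$ as $\Gamma_0^2$ rather than $\Gamma_1^2$ in the $F_2$ case — and to recall that $S$ is finite, which is exactly what the $2$-automaticity of $F_1$ provides.
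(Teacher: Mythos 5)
Your proposal is correct and follows essentially the same route as the paper: the paper's (implicit) proof is exactly your reduction via the block lower-triangular form $\alpha_i=\begin{pmatrix}A_i&0\\ B_i&C_i\end{pmatrix}$, whose corner blocks are the (signed) words $\Gamma_0^2,\Gamma_1\Gamma_0,\Gamma_0\Gamma_1,\Gamma_1^2$ (resp.\ four-letter words for $F_3$) in the semigroup that is finite by the $2$-automaticity of $F_1$ and Proposition 4.3, and you correctly flag the paper's typo listing $A_3$ as $\Gamma_0^2$ instead of $\Gamma_1^2$. Your explicit check that the $4$-kernel is finite simply makes rigorous the step the paper compresses into ``bounded so $4$-automatic'' (which tacitly rests on the standard fact that a $q$-regular sequence taking finitely many values is $q$-automatic), so no genuinely different idea is involved.
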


\bibliographystyle{alpha}
\bibliography{citations}
\end {document}